\colorlet{shadecolor}{blue!15}
\newtheorem{theorem}{Theorem}
\newtheorem{corollary}[theorem]{Corollary}
\newtheorem{lemma}[theorem]{Lemma}
\newtheorem{definition}[theorem]{Definition}
\newtheorem{remark}[theorem]{Remark}
\newcommand{\calA}{\mathcal{A}}
\newcommand{\calB}{\mathcal{B}}
\newcommand{\calC}{\mathcal{C}}
\newcommand{\calD}{\mathcal{D}}
\newcommand{\calE}{\mathcal{E}}
\newcommand{\calF}{\mathcal{F}}
\newcommand{\calG}{\mathcal{G}}
\newcommand{\calH}{\mathcal{H}}
\newcommand{\calO}{\mathcal{O}}
\newcommand{\calP}{\mathcal{P}}
\newcommand{\bbE}{\mathbb{E}}
\newcommand{\bbN}{\mathbb{N}}
\newcommand{\bbP}{\mathbb{P}}
\newcommand{\bbR}{\mathbb{R}}
\newcommand{\bbZ}{\mathbb{Z}}
\newcommand{\ep}{\varepsilon}
\newcommand{\var}{\mathrm{Var}}
\newcommand{\infl}{\mathrm{Inf}}
\numberwithin{equation}{section}
\newcommand{\rk}[1]{\bgroup\color{red}%
  \par\medskip\hrule\smallskip%
  \noindent\textbf{#1}%
  \par\smallskip\hrule\medskip\egroup}
\title{Subcritical phase of $d$-dimensional Poisson-Boolean percolation and its vacant set}
\author{Hugo Duminil-Copin\thanks{Universit\'e de Gen\`eve} \thanks{Institut des Hautes \'Etudes Scientifiques} , Aran Raoufi\addtocounter{footnote}{-1}\footnotemark\ , Vincent Tassion\thanks{ETH Zurich}}
\date{\today}
\begin{document}
\maketitle
 
 \begin{abstract}
 We prove that the Poisson-Boolean percolation on $\bbR^d$ undergoes a sharp phase transition in any dimension under the assumption that the radius distribution has a $5d-3$ finite moment (in particular we do not assume that the distribution is bounded). More precisely, we prove that
 \begin{itemize}
 \item In the whole subcritical regime, the expected size of the cluster of the origin is finite, and furthermore we obtain bounds for the origin to be connected to distance $n$: when the radius distribution has a finite exponential moment, the probability decays exponentially fast in $n$, and when the radius distribution has heavy tails, the probability is equivalent to the probability that the origin is covered by a ball going to distance $n$. 
 \item In the supercritical regime, it is proved that the probability of the origin being connected to infinity satisfies a mean-field lower bound.
\end{itemize}
 The same proof carries on to conclude that the vacant set of Poisson-Boolean percolation on $\bbR^d$ undergoes a sharp phase transition. This paper belongs to a series of papers using the theory of randomized algorithms to prove sharpness of phase transitions, see \cite{DumRaoTas16a,DumRaoTas16b}.
\end{abstract}
 
\section{Introduction}

\paragraph{Definition of the model} Bernoulli percolation was introduced in \cite{BroHam57} by Broadbent and Hammersley  to model the diffusion of a liquid in a porous medium. Originally defined on a lattice, the model was later generalized to a number of other contexts. Of particular interest is the development of continuum percolation (see \cite{MeeRoy08} for a book on the subject), whose most classical example is provided by the Poisson-Boolean model (introduced by Gilbert \cite{Gil}). It is defined as follows. 

Fix a positive integer $d\ge2$ and let $\bbR^d$ be the $d$-dimensional Euclidean space endowed with the $\ell^2$ norm $\|\cdot\|$. For $r>0$ and $x\in\bbR^d$, set $\mathsf B_r^x:=\{y\in\bbR^d:\|y-x\|\le r\}$ and $\partial\mathsf B_r^x:=\{y\in \bbR^d:\|y-x\|=r\}$ for the ball and sphere of radius $r$ centered at $x$. When $x=0$, we simply write $\mathsf B_r$ and $\partial\mathsf B_r$. For a subset $\eta$ of $\bbR^d\times\bbR_+$, set 
$$\calO(\eta):=\bigcup_{(z,R)\in\eta} \mathsf B_R^z.$$

Let $ \mu$ be a measure on $\bbR_+$ (below, we use the notation $\mu[a,b]$ to refer to $\mu([a,b))$, where $a,b\in\bbR\cup\{\infty\}$) and $\lambda$ be a positive number. Let $\eta$ be a Poisson point process of intensity  $ \lambda \cdot dz\otimes \mu$, where $dz$ is the Lebesgue measure on $\bbR^d$. Write $\bbP_\lambda$
for the law of $\eta$ and $\bbE_\lambda$ for the expectation with respect to $\bbP_\lambda$. The random variable $\calO(\eta)$, where $\eta$ has law $\bbP_\lambda$ is called the {\em Poisson-Boolean percolation} of radius law $\mu$ and intensity $\lambda$. A natural hypothesis in the study of Poisson Boolean percolation is to assume $d$-th moment on the radius distribution:
\begin{equation}
  \label{eq:5}
  \int_0^\infty r^d d\mu(r) <\infty.
\end{equation}
Indeed, as observed by Hall \cite{Hal}, the condition~\eqref{eq:5} is necessary in order to avoid the entire space to
be almost surely covered, regardless of the intensity (as long as positive) of the Poisson point process.

\paragraph{Main result} Two points $x$ and $y$ of $\bbR^d$ are said to be {\em connected} (by $\eta$) if there exists a continuous path in $\calO(\eta)$ connecting $x$ to $y$. This event is denoted by $x\longleftrightarrow y$. For $X, Y \subset \bbR^d$, the event $\{X\longleftrightarrow Y\}$ denotes the existence of $x \in X$ and $y \in Y$ such that $x$ is connected to $y$ (when $X=\{x\}$, we simply write $x\longleftrightarrow Y$). 
Define, for every $r>0$, the two functions of $\lambda$
\begin{align*}
\theta_r(\lambda):=\bbP_\lambda[0\stackrel{}\longleftrightarrow \partial\mathsf B_r]\qquad\text{ and }\qquad\theta(\lambda):=\lim_{r\rightarrow\infty}\theta_r(\lambda).\end{align*}
We define the critical parameter $\lambda_c=\lambda_c(d)$ of the model by the formula 
$$\lambda_c:=\inf\{\lambda\ge 0:\theta(\lambda)>0\}.$$

Another critical parameter is often introduced to discuss Poisson-Boolean percolation. Define $\widetilde\lambda_c=\widetilde\lambda_c(d)$ by the formula
$$\widetilde\lambda_c:=\inf\{\lambda\ge0:\inf_{r>0}\bbP_\lambda[\mathsf B_r\stackrel{}{\longleftrightarrow}\partial\mathsf B_{2r}]>0\}.$$
This quantity is of great use as it enables one to initialize renormalization arguments; see e.g.~\cite{Gou1,Gou2} and references therein. As a consequence, a lot is known for Poisson-Boolean percolation with intensity $\lambda<\widetilde\lambda_c$.  We refer to Theorems~\ref{thm:2} and \ref{thm:3} and to \cite{MeeRoy08} for more details on the subject. 

Under the minimal assumption \eqref{eq:5}, Gouer\'e~\cite{Gou1} proved that $\lambda_c$ and $\widetilde{\lambda}_c$ are nontrivial. More precisely he proved $0<\widetilde\lambda_c\le\lambda_c <\infty$. The main result of this paper is the following.
\begin{theorem}[Sharpness for Poisson-Boolean percolation]\label{thm:main}
Fix $d\ge2$ and assume that 
\begin{equation}
\int_{\bbR_+} r^{5d-3}d\mu(r)<\infty.\label{eq:7}
\end{equation}
Then, we have that $\lambda_c=\widetilde\lambda_c$. Furthermore, there exists $c>0$ such that $\theta(\lambda)\ge c(\lambda-\lambda_c)$ for any $\lambda\ge \lambda_c$.
\end{theorem}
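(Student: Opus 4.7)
The plan is to adapt the OSSS-based scheme developed in the companion papers \cite{DumRaoTas16a,DumRaoTas16b} to the continuum Poisson setting. The central quantity is $\theta_n(\lambda) := \bbP_\lambda[0\longleftrightarrow \partial\mathsf B_n]$, and the goal is to establish a differential inequality of the form
\[
\frac{d}{d\lambda}\theta_n(\lambda) \;\geq\; \frac{c\,n}{\sum_{k=1}^n \theta_k(\lambda)}\,\theta_n(\lambda).
\]
From such an inequality, both $\lambda_c=\widetilde\lambda_c$ and the mean-field bound $\theta(\lambda)\geq c(\lambda-\lambda_c)$ follow by the standard Menshikov/Aizenman--Barsky dichotomy: either $\sum_k \theta_k(\lambda)$ stays bounded uniformly in $n$, in which case $\theta_n$ decays fast enough that $\lambda$ must lie below $\widetilde\lambda_c$ by Gouer\'e's renormalization machinery, or $\theta(\lambda)$ is forced to grow at least linearly past $\lambda_c$.

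To prove the differential inequality, I would first establish a continuum OSSS inequality for Poisson functionals. The cleanest route is to discretize $\eta\cap(\mathsf B_N\times[0,M])$ on a fine grid, so that each cell independently contains at most one marked point; the classical OSSS inequality for independent variables then yields $\var(f)\leq\sum_{\text{cells}} \rev(\calT)\,\infl(f)$ for any Boolean $f$ of the discretization and any decision tree $\calT$, and a mesh-to-zero limit together with a truncation argument produces an integral version indexed by $(z,R)\in\bbR^d\times\bbR_+$. The algorithm I would feed into OSSS is the natural one: sample a uniform radius $r\in[n,2n]$ and explore (via a breadth-first search in $\calO(\eta)$) every cluster intersecting $\partial\mathsf B_r$, recording whether any of them connects $0$ to $\partial\mathsf B_n$. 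Upon integrating $z$ over $\mathsf B_n$ and averaging in $r$, the revealment at a point $(z,R)$ reduces (for moderate $R$) to quantities comparable to $\tfrac{1}{n}\sum_k\theta_k(\lambda)$, while a Russo-type formula identifies $\partial_\lambda\theta_n$ with the integral of influences against $\lambda\,dz\otimes d\mu(R)$. Combining these ingredients yields the target inequality in the bounded-radius regime.

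The main obstacle, and the reason for the precise moment assumption~\eqref{eq:7}, is the treatment of large radii. A ball $\mathsf B_R^z$ with $R$ comparable to or greater than $\|z\|$ has both large revealment (any exploration of the neighborhood of $\partial\mathsf B_r$ intersects it with non-negligible probability) and large influence on $\{0\longleftrightarrow\partial\mathsf B_n\}$ (such a ball can single-handedly realize the connection). The plan is to split according to whether $R\leq\|z\|/2$ or $R>\|z\|/2$: the first range is absorbed by the bounded-radius argument applied to a truncated distribution, while the second is controlled using the subcritical finite-energy estimates of Gouer\'e below $\widetilde\lambda_c$ together with explicit volume bounds on how a large ball can be used in a connection event. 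A careful bookkeeping of the resulting powers of $R$ shows that the error is summable under~\eqref{eq:7}; the exponent $5d-3$ emerges from a volume factor, an integration factor over positions of $z$, and an additional geometric factor coming from the spherical coverings inherent in the renormalization scheme. I expect this estimate to absorb most of the technical work, in particular the need to keep constants uniform as the mesh goes to zero and as the truncation $M\to\infty$.

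With the differential inequality in hand on $(\widetilde\lambda_c,\infty)$, the rest is routine: integrating upward from $\lambda_c$ produces the linear lower bound $\theta(\lambda)\geq c(\lambda-\lambda_c)$, while in the regime where $\sum_k \theta_k(\lambda)$ remains bounded the inequality forces $\theta_n(\lambda)$ to decay fast enough for renormalization to place $\lambda$ below $\widetilde\lambda_c$, thereby identifying $\lambda_c$ with $\widetilde\lambda_c$.
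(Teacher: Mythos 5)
Your overall skeleton is the paper's: an OSSS-driven differential inequality $\theta_r'\gtrsim \frac{r}{\Sigma_r}\theta_r(1-\theta_r)$ valid on $(\widetilde\lambda_c,\infty)$, followed by the standard dichotomy. But the proposal omits the step that carries essentially all of the difficulty, namely converting the influence (equivalently, the pivotality) of the coordinates encoding balls of radius $\approx n$ into the unit-scale pivotal quantities produced by Russo's formula. After OSSS and the revealment bound one is left with $\sum_{x,n} n^{d-1}\,\infl_{(x,n)}(f)$, and $\infl_{(x,n)}(f)\le \lambda\,\mu[n-1,n]\,\bbP_\lambda[\calP_x(n)]$, where $\calP_x(n)$ is a ``two arms to $\mathsf B_n^x$ but no connection'' event; this is not comparable to $\theta_r'=\sum_x\bbE_\lambda[\mathrm{Piv}_{x,\calA}]$ unless one shows that, on $\calP_x(n)$, the two clusters can be brought within unit distance of each other at only polynomial cost in $n$. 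The paper does exactly this in Lemma~\ref{lem:infcomp}, whose proof rests on Lemma~\ref{lem:tech}, a lower bound $c_4/r^{2d-2}$ for connection probabilities \emph{restricted to} $\mathsf B_r$ that holds precisely because $\lambda>\widetilde\lambda_c$; this is where both the hypothesis $\lambda>\widetilde\lambda_c$ and the exponent $5d-3$ (a factor $n^{d-1}$ from the revealment times $n^{4d-2}$ from the influence-to-pivotal comparison) are actually consumed. Your plan instead treats large radii as an error term to be ``controlled using the subcritical finite-energy estimates of Gouer\'e below $\widetilde\lambda_c$''. This cannot work: the differential inequality must be established in the regime $\lambda>\widetilde\lambda_c$ (that is where it is used, both for the mean-field bound and to force $\lambda_1\le\widetilde\lambda_c$), where no subcritical input is available; and even where such estimates apply, they bound the probability of large-ball events but provide no mechanism for dominating their (large) influence by the pivotal integral in \eqref{eq:russo}. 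As written, the heavy-tailed case --- the whole point of assumption \eqref{eq:7} --- is not handled, so the argument has a genuine gap at its core.

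Two further, more minor, points. First, the algorithm you describe does not determine the event: exploring the clusters of $\partial\mathsf B_r$ for a uniform $r\in[n,2n]$ misses configurations in which the cluster of $0$ reaches $\partial\mathsf B_n$ but not $\partial\mathsf B_r$; one should seed the exploration at $\partial\mathsf B_s$ with $s$ ranging over $[0,n]$, which is also what yields the factor $\Sigma_r/r$ after averaging in $s$. Second, the mesh-to-zero ``continuum OSSS'' is more delicate than you suggest (cell influences vanish while their number diverges), and the paper avoids it altogether by working with the fixed product structure $(\eta_{(x,n)})$ over unit boxes and unit radius intervals plus one ``ghost'' coordinate absorbing the truncation, so that the only limit taken is the elementary $L\to\infty$. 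These two items are repairable; the missing comparison between influences of large-radius coordinates and unit-scale pivotal probabilities is not, without importing something like Lemmas~\ref{lem:infcomp} and~\ref{lem:tech}.
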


The case of bounded radius is already proved in \cite{zuev1985continuous, meester1994nonuniversality} (see also  \cite{MeeRoy08}), and we refer the reader to \cite{Zie16} for a new proof. Theorems stating sharpness of phase transitions for percolation models in general dimension $d$ were first proved in the eighties for Bernoulli percolation \cite{Men86,AizBar87} and the Ising model \cite{AizBarFer87}. The proofs of sharpness for these models (even alternative proofs like \cite{DumTas15}) harvested independence for Bernoulli percolation and special structures of the random-current representation of the Ising model. In particular, they were not applicable to other models of statistical mechanics. In recent years, new methods were developed to prove sharpness for a large variety of statistical physics models in two dimensions \cite{BolRio06,BefDum12,ahlberg2016sharpness}. These methods rely on general sharp threshold theorems for Boolean functions, but also on \emph{planar} properties of crossing events. In particular, the proofs use planarity in an essential way and seem impotent in higher dimensions.

Recently, the authors proved the sharpness of phase transition for random-cluster model \cite{DumRaoTas16a} and Voronoi percolation \cite{DumRaoTas16b} in arbitrary dimensions. The shared theme of the proofs is the use of randomized algorithms to prove differential inequalities for connection probabilities. Here, we adapt this theme in the context of Poisson-Boolean model. The major difference with the two previous papers is that we do not prove that connectivity probabilities decay exponentially fast in the distance below criticality. The reason is that this fact is simply not true in general for Poisson-Boolean percolation. Indeed, if the tail of the radius distribution is slower than exponential, then one can consider the event that a large ball covers two given points, an event which has a probability larger than exponential in the distance between the two points. 
Instead, we show that $\lambda_c=\widetilde\lambda_c$, without ever referring to exponential decay, by controlling the probability of connectivity functions for $ \lambda > \widetilde \lambda_c$, and by deriving a differential inequality which is valid in this regime.

\paragraph{Decay of $\theta_r(\lambda)$ when $\lambda<\widetilde\lambda_c$}
For standard percolation, sharpness of the phase transition refers to the exponential decay of connection probabilities in the subcritical regime. In Poisson-Boolean percolation with arbitrary radius law $\mu$, we mentioned above that one cannot expect such behavior to hold in full generality. In order to explain why the theorem above is still called ``sharpness'' in this article, we provide below some new results concerning the behavior of Poisson-Boolean percolation when $\lambda<\widetilde\lambda_c$. 

First, remark that for every $\lambda>0$, $\theta_r(\lambda)$ is always bounded from below by
\begin{equation}
  \label{eq:2}
  \phi_r(\lambda):=\mathbb P_\lambda[\exists (z,R)\in\eta\text{ such that } \mathsf B_R^z\text{ contains $0$ and intersects $\partial \mathsf B_r$}],
\end{equation}
whose decay may be arbitrarily slow. Nevertheless, one may expect the following phenomenology when $\lambda<\widetilde\lambda_c$: \begin{itemize}[noitemsep,nolistsep]
\item If $\mu[r,\infty]$ decays exponentially fast, then so does $\theta_r(\lambda)$ (but not necessarily at the same rate of exponential decay).\item Otherwise, the decay of $\theta_r(\lambda)$ is governed by $\phi_r(\lambda)$, in the sense that it is roughly equivalent to it.
\end{itemize}
The first item above is formalized by the following theorem.
\begin{theorem}
  \label{thm:2} If there exists $c>0$ such that $\mu[r,\infty]\le \exp(-cr)$ for every $r\ge1$, then, for every $\lambda<\widetilde\lambda_c$, there exists $c_\lambda>0$ such that for every $r\ge1$,
  \begin{equation*}
    \theta_r(\lambda)\le \exp(-c_\lambda r).
  \end{equation*}
\end{theorem}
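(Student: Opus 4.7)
\emph{Plan.} The approach is to combine a Gou\'er\'e-type renormalization for crossing probabilities with the exponential tail of $\mu$. Define $\pi(s):=\bbP_\lambda[\mathsf B_s\longleftrightarrow\partial\mathsf B_{2s}]$. Any ball of $\eta$ containing the origin also intersects $\mathsf B_{r/2}$, so $\theta_r(\lambda)\le \pi(r/2)$, and it suffices to prove $\pi(s)\le e^{-\tilde c s}$ for some $\tilde c=\tilde c_\lambda>0$.

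\emph{Initialisation and renormalisation.} By the definition of $\widetilde\lambda_c$, the hypothesis $\lambda<\widetilde\lambda_c$ gives $\inf_{s>0}\pi(s)=0$, so we can fix a starting scale $s_0=s_0(\lambda,\mu)$ at which $\pi(s_0)$ is below a threshold $p_0>0$ to be chosen. The central step is a doubling inequality, valid for every $s\ge s_0$ and every $R\in(0,s)$:
\[
\pi(2s)\le C_d\,(s/R)^{d-1}\,\pi(s)^2 \;+\; C'_d\,\lambda \int_R^\infty (2s+\rho)^d\,d\mu(\rho).
\]
The first summand bounds the contribution of crossings built only from balls of radius at most $R$ by covering the annulus $\mathsf B_{2s}\setminus\mathsf B_s$ with $O((s/R)^{d-1})$ pieces, in each of which two disjoint annular crossings of size $s$ are forced (hence the square $\pi(s)^2$, via a BK-type disjoint-occurrence inequality for the Poisson point process). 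The second summand controls, through the expected count, the presence of a single ball of radius greater than $R$ intersecting a neighbourhood of $\mathsf B_{2s}$.

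\emph{Tail bound and iteration.} The hypothesis $\mu[\rho,\infty]\le e^{-c\rho}$ combined with integration by parts yields
\[
\int_R^\infty (2s+\rho)^d\,d\mu(\rho)\le C''_d\,(s+R)^d\,e^{-cR},
\]
which, for $R=\alpha s$ with $\alpha>0$ fixed small, is exponentially small in $s$. Iterating the doubling inequality along dyadic scales $s_k=2^k s_0$ with $R_k=\alpha s_k$, the coefficient $C_d\alpha^{-(d-1)}$ is a fixed constant, so choosing $p_0$ small enough that $C_d\alpha^{-(d-1)}p_0<1/2$ makes the first term contract geometrically; together with the exponentially small tail added at each step, this yields $\pi(s_k)\le C e^{-\tilde c s_k}$, and monotone interpolation between dyadic scales extends the bound to all $s\ge s_0$.

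\emph{Main obstacle.} The hard part is the rigorous derivation of the doubling inequality. Since the radii in $\eta$ are unbounded, the decomposition of an annular crossing into a small-radius chain and at most one large ball must be done carefully, and one needs a BK-type disjoint-occurrence bound for events measurable with respect to a Poisson point process, together with a sharp geometric covering of the annulus that produces the factor $(s/R)^{d-1}$. Once this inequality is in place, the iteration and the exponential tail of $\mu$ combine cleanly to produce exponential decay.
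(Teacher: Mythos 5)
Your route is the classical Gou\'er\'e-type renormalization (close in spirit to \cite{Gou1,Gou2}), and it differs from the paper's proof, which uses the renormalization inequality of Lemma~\ref{lem:17} (a bound by a maximum over products at complementary scales $u+v=1-\alpha$), first extracts a polynomial bound $\theta_r^\alpha(\lambda)\le r^{-d-1}$ via Corollary~\ref{cor:a}, and then boosts it to exponential decay by applying Lemma~\ref{lem:17} with $\alpha=1/r$. Your scheme has a genuine economy: a squaring recursion along a geometric sequence of scales only needs \emph{one} starting scale with small crossing probability, which follows directly from the definition of $\widetilde\lambda_c$, whereas the paper must first prove that the crossing probability tends to $0$. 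The top-level iteration and the tail estimate for $\int_R^\infty(2s+\rho)^d\,d\mu(\rho)$ with $R=\alpha s$ are fine, as is the reduction $\theta_r(\lambda)\le\bbP_\lambda[\mathsf B_{r/2}\leftrightarrow\partial\mathsf B_r]$.

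The genuine gap is exactly where you flag it: the doubling inequality is asserted, not proved, and as stated it is not correct. First, the appeal to a BK-type disjoint-occurrence inequality is the wrong mechanism: a single crossing path of the annulus witnesses both sub-crossings, and the witnessing balls near the interface can be shared, so disjoint occurrence is not automatic; moreover no clean BK inequality is needed. The standard (and the paper's) mechanism is spatial independence: after discarding balls of radius $>R$ (the second term), the two sub-crossing events must be measurable with respect to balls meeting two regions separated by more than $2R$, and then they involve disjoint sets of Poisson points. Second, with that separation requirement two crossings ``of size $s$'' do not fit: the annulus between radii $2s$ and $4s$ has width $2s$, so two shells each of width $s$ leave no room for a $2R$-gap. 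The correct statement bounds $\pi(2s)$ by a constant times $\pi(\gamma s)^2$ for some fixed $\gamma<1$ (obtained by covering the inner spheres of two separated sub-shells by $O(\alpha^{-(d-1)})$ balls), plus the large-ball term. This still closes the iteration, but it forces you to prove scale-comparison bounds of the type $\pi(s)\le C\,\pi(s')$ for comparable $s'\le s$ (a covering argument, as in \eqref{eq:l1}), which you also need for the final ``monotone interpolation'' step, since $s\mapsto\pi(s)$ is not monotone. With these corrections the argument goes through, but in its present form the key lemma and the two places relying on monotonicity are unproven.
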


Giving sense to the second item above is not easy in full generality, for instance when the law of $\mu$ is very irregular (one can imagine distributions $\mu$ that do not decay exponentially fast, but for which large range of radii are excluded). In Section~\ref{sec:4.3}, we give a general condition under which a precise description of $\theta_r(\lambda)$ can be obtained. To avoid introducing technical notation here, we only give two applications of the results proved in Section~\ref{sec:4.3}. We believe that these applications bring already a good idea of the general phenomenology. The proof mostly relies on new renormalization inequalities. We believe that these renormalization inequalities can be of great use to other percolation models. The theorem claims that the cheapest way for 0 to be connected to distance $r$ is if a single huge ball covers 0 and intersects the boundary of $\mathsf B_r$.

\begin{theorem}\label{thm:3}
Fix $d\ge2$. Let $\mu$ is of one of the two following cases:
\begin{enumerate}[noitemsep,nolistsep]
\item[C1]\label{item:1} There exists $c>0$ such that  $\mu[r,\infty]=1/r^{d+c}$ for every $r\ge 1$,
 \item[C2]\label{item:2} There exist $c>0$  and $0<\alpha<1$ such that $\mu[r,\infty]=\exp(-cr^{\alpha})$ for every $r\ge1$.\end{enumerate}
Then, for every $\lambda<\widetilde\lambda_c$,
 \begin{equation*}
    \lim_{r\rightarrow\infty}\frac{\theta_r(\lambda)}{\phi_r(\lambda)}=1.
  \end{equation*}
\end{theorem}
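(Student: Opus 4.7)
\emph{Lower bound.} The inequality $\theta_r(\lambda)\ge \phi_r(\lambda)$ is immediate: the single-ball event in the definition of $\phi_r$ is contained in $\{0\longleftrightarrow\partial\mathsf B_r\}$. The plan is therefore to prove the matching upper bound $\theta_r(\lambda)\le (1+o(1))\phi_r(\lambda)$. The guiding intuition, for $\lambda<\widetilde\lambda_c$, is that the only cheap mechanism for $0$ to reach distance $r$ is the one captured by $\phi_r$: every alternative mechanism is either exponentially suppressed (it relies on ``small'' balls and must pay a subcritical cost) or combinatorially rare (it relies on more than one ``large'' ball).

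\emph{Truncation and decomposition.} I would fix a cutoff $K=K(r)$ and split $\eta=\eta^{\le K}\sqcup \eta^{>K}$ into two independent Poisson processes, keeping balls of radius $\le K$, respectively $>K$. The measure $\mu^{\le K}=\mu\cdot\mathbbm{1}_{[0,K]}$ is dominated by $\mu$, so by monotonicity $\widetilde\lambda_c(\mu^{\le K})\ge \widetilde\lambda_c(\mu)>\lambda$; since $\mu^{\le K}$ has bounded support, Theorem~\ref{thm:2} furnishes a rate $c_\lambda(K)>0$ such that $\bbP_\lambda[0\longleftrightarrow\partial\mathsf B_s\text{ using only }\eta^{\le K}]\le \exp(-c_\lambda(K)s)$. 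Decomposing $\{0\longleftrightarrow\partial\mathsf B_r\}$ according to the number $N$ of balls of $\eta^{>K}$ belonging to the cluster of $0$ then yields three contributions: (i) $N=0$, bounded directly by the truncated exponential decay above; (ii) $N=1$, which splits further into (ii.a) the large ball by itself covers $0$ and touches $\partial\mathsf B_r$ -- contributing exactly $\phi_r(\lambda)$ up to lower order -- and (ii.b) small balls must extend the connection, introducing an extra truncated-crossing factor; and (iii) $N\ge 2$, bounded by a union bound of order $\bigl(\lambda r^d\mu[K,\infty]\bigr)^2$ by independence inside $\eta^{>K}$.

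\emph{Optimization and the main obstacle.} The final step is to choose $K$ optimally in each case. In case C1 a direct computation of the Poisson intensity integral yields $\phi_r(\lambda)\asymp r^{-c}$; taking $K=r^{1-\delta}$ for small $\delta>0$ makes the term (iii) polynomially smaller than $\phi_r$, while terms (i) and (ii.b) become super-polynomially small. In case C2 one has $\phi_r(\lambda)\asymp \exp(-cr^\alpha)$; here $K$ must be chosen large enough that $\mu[K,\infty]^2=\exp(-2cK^\alpha)$ beats $\phi_r$, yet small enough that the truncated rate $c_\lambda(K)r$ still dominates $cr^\alpha$. The main obstacle -- and the reason the general result must be proved in the dedicated Section~\ref{sec:4.3} before C1 and C2 can be extracted as corollaries -- is the quantitative dependence of $c_\lambda(K)$ on $K$: Theorem~\ref{thm:2} only guarantees its positivity, whereas what is required is a scale-uniform renormalization inequality relating $\theta$ at scale $r$ to $\phi$ at scale $r$ plus $\theta$ at an intermediate scale. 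Upgrading Theorem~\ref{thm:2} to such a uniform statement, and inserting it into the three-term decomposition above, is exactly where the ``new renormalization inequalities'' announced in the introduction enter the proof.
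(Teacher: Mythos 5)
Your lower bound is exactly the paper's \eqref{eq:m1}, but the proposed upper bound has a genuine gap, which you yourself flag and then defer: the whole scheme hinges on a \emph{quantitative} lower bound for the exponential rate $c_\lambda(K)$ of the truncated model as a function of the cutoff $K$, and Theorem~\ref{thm:2} is purely qualitative. The gap is not cosmetic. In case C2 your bound on the $N\ge 2$ term, $(\lambda r^d\mu[K,\infty])^2=\lambda^2 r^{2d}e^{-2cK^{\alpha}}$, is $o(\phi_r(\lambda))$ (note $\phi_r(\lambda)$ is of order $e^{-c(r/2)^{\alpha}}$ up to polynomial factors, by \eqref{eq:p1}) only if $K$ is of order $r$; but then the $N=0$ term $e^{-c_\lambda(K)r}$ beats $\phi_r(\lambda)$ only if $c_\lambda(\Theta(r))\gg r^{\alpha-1}$, a scale-uniform statement that neither Theorem~\ref{thm:2} nor monotonicity in $K$ provides, and which is essentially equivalent to what has to be proved. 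A second, subtler problem sits in your term (ii.b): to obtain the constant $1$ (and not merely $\theta_r\le C\phi_r$) you must show that the event that one big ball covers $0$ and reaches only to distance $r-s$, the remaining gap being bridged by other balls, has total probability $o(\phi_r(\lambda))$ \emph{after summing over all} $s$. Since $\phi_{r-s}(\lambda)/\phi_r(\lambda)$ grows like $\exp(c\,\alpha\, s\,(r/2)^{\alpha-1}/2)$ while the bridging probability decays only stretched-exponentially in $s$ (the bridge may itself use a medium ball), the competition between these two effects is precisely the delicate point, and ``an extra truncated-crossing factor'' does not settle it.

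The paper takes a different route that supplies exactly this missing quantitative input. Instead of truncating the radius law, it proves the renormalization inequality of Lemma~\ref{lem:17}, $\theta^{\alpha}_r\le\pi^{\delta}_r+c_1(\delta^2\alpha)^{-d}\max\theta^{\alpha}_{ur}\theta^{\alpha}_{vr}$ (here $\alpha$ is the inner-radius ratio of Section~\ref{sec:renorm-ofcr-prob}; the C2 exponent is renamed $\gamma$ there), comparing the one-arm probability at scale $r$ with the single-big-ball probability at the same scale plus products of one-arm probabilities at two smaller scales. Corollary~\ref{cor:a} provides the initial smallness for $\lambda<\widetilde\lambda_c$, Lemma~\ref{thm:16} converts the recursion into $\theta^{\alpha}_r\le(1+\eps)\pi^{\alpha}_r$ under the regularity hypotheses \eqref{eq:f1}--\eqref{eq:f2}, and in case C2 a further bootstrap with scale-dependent ratios $\alpha_k=\alpha 3^{-k\overline\gamma}$ (conditions \eqref{eq:9}, \eqref{eq:10}, \eqref{eq:b3}) is needed to upgrade $\theta_r\le\phi_r^{1+o(1)}$ to $\theta_r\le(1+o(1))\phi_r$. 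Your outline correctly locates where the new input must enter, but it does not supply it; as it stands it is a heuristic reduction rather than a proof.
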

\paragraph{Vacant set of the Poisson-Boolean model} Another model of interest can also be studied using the same techniques. In this model, the connectivity of the points is given by continuous paths in the complement of $\calO(\eta)$. Write $x\stackrel{*}{\longleftrightarrow}y$ for the event that $x$ and $y$ are connected by a continuous path in $\bbR^d\setminus\calO(\eta)$, and $X\stackrel{*}{\longleftrightarrow}Y$ if there exist $x\in X $ and $y\in Y$ such that $x$ and $y$ are connected. 
For $\lambda$ and $r\ge0$, define 
\begin{align*}
\theta_r^*(\lambda):=\bbP_\lambda[0\stackrel{*}\longleftrightarrow \partial\mathsf B_r]\qquad\text{ and }\qquad\theta^*(\lambda)=\lim_{r\rightarrow\infty}\theta_r^*(\lambda).\end{align*}
We define the critical parameter $\lambda_c^*$  (see e.g.~\cite{Pen17} or~\cite{ATT17} for the fact that it is positive)  by the formula
\begin{align*}\lambda_c^*&:=\sup\{\lambda\ge 0:\theta^*(\lambda)>0\}.
\end{align*}
We have the following theorem.
\begin{theorem}[Sharpness of the vacant set of the Poisson-Boolean model]
Fix $d\ge2$ and assume that the radius distribution $\mu$ is compactly supported. Then, for all $\lambda < \lambda_c^*$, there exists $c_\lambda >0$ such that for every $r\ge1$,
$$ \theta_r^*(\lambda) \leq \exp (-c_\lambda r).$$
Furthermore, there exists $c>0$ such that for every $\lambda\ge\lambda_c^*$, 
$\theta^*(\lambda)\ge c(\lambda-\lambda_c^*).$
\end{theorem}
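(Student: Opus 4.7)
The plan is to adapt the randomized algorithm and OSSS machinery used for Theorem~\ref{thm:main} to the \emph{vacant} connection event $\calA_r^*:=\{0\stackrel{*}{\longleftrightarrow}\partial\mathsf B_r\}$, exploiting the compactness of $\mathrm{supp}(\mu)\subset[0,R_0]$ to make interactions of finite range. The one conceptual change is that $\calA_r^*$ is \emph{decreasing} in $\eta$, so Margulis--Russo for Poisson processes reads
\[
-\frac{d}{d\lambda}\theta_r^*(\lambda)=\int_{\bbR^d\times\bbR_+}\bbP_\lambda\bigl[(z,R)\text{ is pivotal for }\calA_r^*\bigr]\,dz\,d\mu(R),
\]
and the target differential inequality takes the form
\[
-\frac{d}{d\lambda}\theta_r^*(\lambda)\ \ge\ \frac{c\,r}{\Sigma_r^*(\lambda)}\,\theta_r^*(\lambda)\bigl(1-\theta_r^*(\lambda)\bigr),\qquad \Sigma_r^*(\lambda):=\sum_{k=1}^{r}\theta_k^*(\lambda).
\]
Both conclusions of the theorem---exponential decay strictly below $\lambda_c^*$ and the mean-field bound above---will then follow from the algebraic integration lemma of \cite{DumRaoTas16a,DumRaoTas16b}. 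The key point is that the compact support assumption rules out the ``big ball'' covering scenario that forced the more delicate conclusions of Theorems~\ref{thm:2}--\ref{thm:3}.

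To set up OSSS, I discretize: partition $\mathsf B_{r+R_0}$ into boxes $\{B_i\}_{i\in I}$ of diameter $\varepsilon\ll 1$, and work with the independent variables $\eta_i:=\eta\cap(B_i\times[0,R_0])$. Sending $\varepsilon\to 0$ in the end, the OSSS inequality yields
\[
\theta_r^*(\lambda)\bigl(1-\theta_r^*(\lambda)\bigr)\ \le\ \sum_{i\in I}\delta_i(\mathsf T)\,\mathrm{Inf}_i(\calA_r^*)
\]
for any randomized algorithm $\mathsf T$ computing $\mathbf 1_{\calA_r^*}$, while the Margulis--Russo formula (together with the continuity of $\sum_i\mathrm{Inf}_i$ in $\varepsilon$) converts $\sum_i\mathrm{Inf}_i(\calA_r^*)$ into a constant multiple of $-\tfrac{d}{d\lambda}\theta_r^*(\lambda)$.

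For the algorithm, I follow the blueprint of \cite{DumRaoTas16b}: sample $s$ uniformly in $\{1,\dots,r\}$, and perform a breadth-first exploration of the vacant cluster of $\partial\mathsf B_s$. Concretely, maintain a known vacant set $V\supset\partial\mathsf B_s\setminus\calO(\eta)$; query a box $B_i$ as soon as some point of $V$ lies within distance $R_0$ of $B_i$; update $V$ using the revealed radii; and stop once $V$ saturates. At termination one knows whether this vacant cluster touches both $\{0\}$ and $\partial\mathsf B_r$, and so whether $\calA_r^*$ occurs. The revealment of $B_i$ is then bounded by the probability that $B_i$ lies within $R_0$ of the vacant cluster of $\partial\mathsf B_s$; by translation invariance and averaging over $s$ this gives
\[
\delta_i(\mathsf T)\ \le\ \frac{C}{r}\sum_{k=0}^{r}\theta_k^*(\lambda)\ =\ \frac{C}{r}\,\Sigma_r^*(\lambda).
\]
Plugging the revealment and influence bounds into OSSS produces the displayed differential inequality, which integrates to the two claimed statements.

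The main obstacle will be the algorithm design itself: a ``vacant exploration'' is necessarily carried out by revealing \emph{occupied} balls at the frontier, and one has to check carefully that the deterministic buffer of width $R_0$ needed to certify vacancy of a cell does not spoil either the revealment bound above or the identification of $\sum_i\mathrm{Inf}_i$ with $-\tfrac{d}{d\lambda}\theta_r^*$. Compactness of $\mathrm{supp}(\mu)$ is essential here: it keeps this buffer uniform and of bounded size, and is precisely what permits genuine exponential decay, in contrast with the unbounded-radius regimes treated in Theorems~\ref{thm:2} and \ref{thm:3}.
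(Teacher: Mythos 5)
Your proposal is essentially the proof the paper has in mind: the paper omits the argument entirely, saying it ``follows the same lines'' as Theorem~\ref{thm:main}, and your adaptation --- OSSS applied to an exploration of the vacant clusters of $\partial\mathsf B_s$ for a randomly chosen $s$, Russo's formula for the now-decreasing event, and integration of the resulting differential inequality as in \cite{DumRaoTas16a,DumRaoTas16b}, with compact support of $\mu$ removing the need for any analogue of the condition $\lambda>\widetilde\lambda_c$ and of Lemma~\ref{lem:tech} --- is exactly that adaptation. The one imprecision is that $\sum_i\mathrm{Inf}_i$ is not literally a constant multiple of $-\tfrac{d}{d\lambda}\theta_r^*$; as in Lemma~\ref{lem:infcomp} one must bound each influence by a pivotal probability through a bounded local modification (here, forcing the absence of centres in an $R_0$-neighbourhood of the box, an event of probability bounded below), which is precisely where compactness enters --- but you flag this step yourself and it is routine.
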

Since the proof follows the same lines as in Theorem~\ref{thm:main}, we omit it in the article and leave it as an exercise to the reader.

\paragraph{Strategy of the proof of Theorem~\ref{thm:main}}
Let us now turn to a brief description of the general strategy to prove our main theorem. Theorem \ref{thm:main} is a consequence of the following lemma.
\begin{lemma} \label{lem:mlem}
Assume the moment condition~\eqref{eq:7} on the radius distribution. Then, there exists a constant $c_1>0$ such that for every $r\geq 0$ and $\lambda\ge\widetilde\lambda_c$, 
\begin{equation} \label{eq:mlem}
{\theta'_r}(\lambda) ~\geq~ c_1 \,  {\frac{r}{{\Sigma_r(\lambda)}}} \, \theta_r(\lambda)( 1 - \theta_r(\lambda)),
\end{equation} 
where $\Sigma_r(\lambda):= \int_0^r \theta_s(\lambda)ds$.
\end{lemma}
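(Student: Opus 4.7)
My plan is to follow the strategy developed by the authors in \cite{DumRaoTas16a, DumRaoTas16b}: combine a Russo-type formula for the Poisson intensity with the OSSS inequality applied to a well-chosen randomized decision algorithm. First I would discretize $\bbR^d\times\bbR_+$ into small cells $c=B_c\times I_c$ and attach to each an independent Bernoulli variable of parameter approximately $p_c=\lambda\,|B_c|\,\mu(I_c)$, so that $\{0\longleftrightarrow\partial\mathsf B_r\}$ becomes a Boolean function of the cell states; taking the mesh to zero recovers the continuum model. The Poisson Russo formula reads
\begin{equation*}
\theta_r'(\lambda) \;=\; \int_{\bbR^d\times\bbR_+} \bbP_\lambda\bigl[(z,R)\text{ is pivotal for }\{0\longleftrightarrow\partial\mathsf B_r\}\bigr]\,dz\,d\mu(R),
\end{equation*}
which in the discretization corresponds (up to a factor of $\lambda$) to $\sum_c \mathrm{Inf}_c$ with $\mathrm{Inf}_c$ the usual Fourier-analytic influence of cell $c$.

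Next I would apply OSSS to any algorithm $\mathsf T$ determining the event $\{0\longleftrightarrow\partial\mathsf B_r\}$:
\begin{equation*}
\theta_r(\lambda)\bigl(1-\theta_r(\lambda)\bigr) \;\leq\; \sum_{c} \delta_c(\mathsf T)\,\mathrm{Inf}_c,
\end{equation*}
where $\delta_c(\mathsf T)$ is the probability that $\mathsf T$ queries $c$. Combined with the Russo identity above, proving $\delta_c(\mathsf T)\le C\,\Sigma_r(\lambda)/r$ uniformly in $c$ for a suitable $\mathsf T$ immediately yields~\eqref{eq:mlem}. The algorithm I would use: sample $s$ uniformly in $[0,r]$, then run a breadth-first exploration of the connected component of $\partial\mathsf B_s$ in $\calO(\eta)$, querying a cell $(z,R)$ only when the exploration needs to determine whether $\mathsf B_R^z$ belongs to that component. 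Such a cell is then queried only if $\mathsf B_R^z$ is connected to $\partial\mathsf B_s$ in a configuration differing from $\eta$ by at most the extra ball at $(z,R)$; averaging over the random $s$ produces
\begin{equation*}
\delta_{(z,R)} \;\leq\; \frac{1}{r}\int_0^r \bbP_\lambda\bigl[\mathsf B_R^z\longleftrightarrow\partial\mathsf B_s\bigr]\,ds.
\end{equation*}

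The core technical difficulty will be bounding this revealment by $C\,\Sigma_r(\lambda)/r$ uniformly in $(z,R)$. For small $R$ and $z$ at distance $\rho$ from $\partial\mathsf B_s$, the integrand is at most $\theta_\rho(\lambda)$, so integrating in $s$ recovers $\Sigma_r(\lambda)/r$. The hard part will be large balls: when $R$ is comparable to $r$, the ball $\mathsf B_R^z$ can cross $\partial\mathsf B_s$ with probability much larger than ordinary cluster-connection probabilities. Controlling their contribution will require both the moment condition $\int r^{5d-3}\,d\mu<\infty$ (to penalize large radii) and the assumption $\lambda\ge\widetilde\lambda_c$, which supplies uniform box-crossing estimates and hence a renormalization argument reducing ``large ball connected to $\partial\mathsf B_s$'' probabilities to products of $\theta$-type probabilities. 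The exponent $5d-3$ emerges from balancing the $d$-dimensional Lebesgue measure in the Russo integration against the $(d-1)$-dimensional surface of large balls and the additional powers needed to sum the resulting cell contributions. Executing this renormalization cleanly — so that $\sum_c \delta_c\,\mathrm{Inf}_c$ is bounded by $(\Sigma_r(\lambda)/r)\cdot\lambda\,\theta_r'(\lambda)$ even in the presence of arbitrarily large balls — is where essentially all the technical work will lie.
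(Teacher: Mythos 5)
Your overall skeleton --- Russo's formula for the Poisson intensity, the OSSS inequality, and an exploration of the cluster of $\partial\mathsf B_s$ with $s$ averaged over $[0,r]$ --- is the same as the paper's, but the step you declare decisive, namely a revealment bound $\delta_c(\mathsf T)\le C\,\Sigma_r(\lambda)/r$ \emph{uniformly in the cell} $c=(z,R)$, is false and cannot be repaired by any renormalization. For a cell whose radius coordinate is large (say $R\ge \|z\|+r$), the ball $\mathsf B_R^z$ meets the region explored by the algorithm \emph{deterministically}, so its revealment is $1$ (and it is bounded below by a constant as soon as $R$ is comparable to $r$), whereas $\Sigma_r(\lambda)/r$ is the Ces\`aro average of $\theta_s(\lambda)$ and tends to $0$ exactly in the regime of interest. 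The obstruction with large balls is thus not that ``large ball connected to $\partial\mathsf B_s$'' probabilities are delicate to estimate --- they are of order one --- so your proposed use of $\lambda\ge\widetilde\lambda_c$ through box-crossing estimates to shrink these revealments attacks the wrong quantity.

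What closes the argument in the paper, and is missing from your sketch, is a comparison on the influence/pivotality side rather than the revealment side. Even granting your fine-mesh identification of $\sum_c\infl_c$ with (a constant times) $\lambda\,\theta_r'(\lambda)$, what must be shown is that the large-radius portion of the Russo integral, where $\delta\approx 1$, is itself dominated by $(\Sigma_r(\lambda)/r)\,\theta_r'(\lambda)$; this is a statement comparing large-ball pivotality to pivotality at bounded scale, not a revealment estimate. Concretely, the paper groups coordinates into unit boxes $\mathsf S^x$ and radius scales $[n-1,n)$: the revealment of such a coordinate is only $c\,n^{d-1}\Sigma_r(\lambda)/r$ after integration in $s$ (Lemma~\ref{lem:1}), and its influence is at most $\lambda\,\mu[n-1,n]$ times the probability of $\calP_x(n)$, the event that the clusters of $0$ and of $\partial\mathsf B_r$ both approach $\mathsf B_n^x$ without being connected. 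The crux (Lemma~\ref{lem:infcomp}) is that $\bbP_\lambda[\calP_x(n)]\le c\,n^{3d-2}\sum_y\bbP_\lambda[\calP_y(\mathbf r_*)]$, i.e.\ large-scale proximity of the two clusters is converted into proximity at the bounded scale $\mathbf r_*$, which is what $\bbE_\lambda[\mathrm{Piv}_{y,A}]$ in Russo's formula actually measures; the conversion glues the two clusters using Lemma~\ref{lem:tech}, the polynomial lower bound $\bbP_\lambda\big[0\stackrel{\mathsf B_r}{\longleftrightarrow}\mathsf B_{\mathbf r^*}^x\big]\ge c_4\,r^{-(2d-2)}$ for $x\in\partial\mathsf B_r$, valid for $\lambda>\widetilde\lambda_c$ and proved by a separate induction that itself uses the moment hypothesis. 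This is the unique place where $\lambda\ge\widetilde\lambda_c$ enters, and the exponent $5d-3$ arises as $n^{d-1}\cdot n^{4d-2}$ summed against $\mu[n-1,n]$, not from a surface/volume balance in the revealment. Without this influence-to-pivotality comparison (or a substitute for it), your plan does not yield~\eqref{eq:mlem}.
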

The whole point of Section~\ref{sec:3} will be to prove Lemma~\ref{lem:mlem}. Before that, let us mention how it implies Theorem~\ref{thm:main}. 

\begin{proof}[Proof of Theorem~\ref{thm:main}]Fix $\lambda_0> \widetilde\lambda_c$. As in \cite[Lemma~3.1]{DumRaoTas16a}, Lemma~\ref{lem:mlem} implies that there exists $\lambda_1\in[\widetilde\lambda_c,\lambda_0]$ such that
\begin{itemize}[noitemsep]
\item for any $\lambda\ge\lambda_1$, $\theta(\lambda)\ge c(\lambda-\lambda_1)$.
\item for any $\lambda\in(\widetilde\lambda_c,\lambda_1)$, there exists $c_\lambda>0$ such that $\theta_r(\lambda)\le \exp(-c_\lambda r)$ for every $r\ge0$.
\end{itemize}
The two items imply that $\lambda_1=\lambda_c$. Yet, the second item implies that $\lambda_1\le \widetilde\lambda_c$, since clearly exponential decay would imply that for $\lambda\in(\widetilde\lambda_c,\lambda_1)$,
$$\lim_{r\rightarrow\infty}\bbP_\lambda[\mathsf B_r\stackrel{}{\longleftrightarrow}\partial\mathsf B_{2r}]=0.$$
 Since $\lambda_1 = \lambda_c\ge \widetilde\lambda_c$, we deduce that $\lambda_1=\lambda_c=\widetilde\lambda_c$ and the proof is finished.
 \end{proof}
\begin{remark} Note that we did not deduce anything from Lemma~\ref{lem:mlem} about exponential decay since eventually $\lambda_1=\widetilde\lambda_c$. It is therefore not contradictory with the case in which $\mu[r,\infty]$ does not decay exponentially fast.
 \end{remark}
The proof of Lemma~\ref{lem:mlem} relies on the OSSS inequality,  first proved in \cite{OSSS}, connecting randomized algorithms and influences in a product space. Let us briefly describe this inequality.
Let $ I$ be a finite set of {\em coordinates}, and let $\Omega =\prod_{i \in I} \Omega_i$ be a product space endowed with product measure $\pi =\otimes_{i \in I} \pi_i$. 
An {\em algorithm $\mathsf T$} determining $f: \Omega \rightarrow \{0,1\}$ takes a configuration $\omega=(\omega_i)_{i\in I}\in\Omega$ as an input, and reveals the value of $\omega$ in different $i\in I$ one by one. 
At each step, which coordinate will be revealed next depends on the values of $\omega$ revealed so far. The algorithm stops as soon as the value of $f$ is the same no matter the values of $\omega$ on the remaining coordinates. 
Define the functions $\delta_i(\mathsf T)$ and $\infl_{i}(f)$, which are respectively called the {\em revealment} and the {\em influence} of the $i$-th coordinate, by
\begin{align*}\delta_i(\mathsf T) &:=  \pi[\text{$\mathsf T$ reveals the value of } \omega_i],\\
\infl_i(f) &:= \pi [\,  f (\omega) \neq f(\tilde\omega)]\,,\end{align*}
where $\tilde\omega$ denotes the random element in $\Omega^I$ which is the same as $\omega$ in every coordinate except the $i$-th coordinate which is \emph{resampled independently}. 
\begin{theorem}[\cite{OSSS}] \label{thm:OSSS2}
For every function $f: \Omega \rightarrow \{0,1\}$, and every algorithm $\mathsf T$ determining $f$, 
\begin{equation} \label{eq:OSSS}\tag{OSSS}
\var_\pi  (f) \leq \sum_{i\in I} \delta_{i}(\mathsf T) \, \infl_{i}(f),
\end{equation}
where $\var_\pi$ is the variance with respect to the measure $\pi$.
\end{theorem}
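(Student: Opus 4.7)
The plan is a coupling-and-telescoping argument. Take two independent samples $\omega, \omega' \sim \pi$, run the algorithm $\mathsf T$ on $\omega$ to obtain a (random) sequence of queries $J_1, \ldots, J_\tau$, and define interpolating configurations $\omega^{(0)}, \ldots, \omega^{(\tau)}$ by letting $\omega^{(k)}$ agree with $\omega$ on the queried coordinates $\{J_1, \ldots, J_k\}$ and with $\omega'$ elsewhere. Then $\omega^{(0)} = \omega'$, while $\omega^{(\tau)}$ agrees with $\omega$ on every coordinate queried by $\mathsf T$; since $\mathsf T$ determines $f$, this forces $f(\omega^{(\tau)}) = f(\omega)$.

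Applying the triangle inequality to the telescoping identity $f(\omega) - f(\omega') = \sum_k [f(\omega^{(k)}) - f(\omega^{(k-1)})]$, and observing that consecutive configurations differ at a single coordinate so that each $i \in I$ contributes at most once, yields
\[
 |f(\omega) - f(\omega')| \;\le\; \sum_{i \in I} \mathbf{1}\{i \text{ is revealed by } \mathsf T\} \cdot \mathbf{1}\{f(\omega^{(\sigma_i)}) \neq f(\omega^{(\sigma_i - 1)})\},
\]
where $\sigma_i$ denotes the step at which $\mathsf T$ queries $i$. The pair $(\omega^{(\sigma_i - 1)}, \omega^{(\sigma_i)})$ differs only at coordinate $i$, with entries $\omega_i$ and $\omega'_i$, which are independent samples from $\pi_i$; this is exactly the single-coordinate resampling structure appearing in the definition of $\infl_i(f)$. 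The essential probabilistic input is that the event $\{i \text{ is revealed by } \mathsf T\}$ depends on $\omega$ only through the coordinates queried \emph{strictly before} $i$, hence is independent of $\omega_i$ and of $\omega'_i$.

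Taking expectations, using the identity $\mathbb{E}|f(\omega) - f(\omega')| = 2\var_\pi(f)$ valid for Boolean $f$, and bounding each term on the right by $\delta_i(\mathsf T) \cdot \infl_i(f)$ then delivers the inequality. The hard part will be this last per-coordinate estimate. Conditioning on the trajectory of queries up to step $\sigma_i$ restricts the joint law of $\omega$ on the already-queried coordinates to values that forced $\mathsf T$ down the given branch, so the conditional flip probability at coordinate $i$ is a conditional influence rather than $\infl_i(f)$ itself. To recover $\delta_i(\mathsf T) \cdot \infl_i(f)$ after summation, I would sum contributions along all trajectories and invoke the tower identity $\mathbb{E}[\infl_i(f \mid \omega_A)] = \infl_i(f)$, valid for every $A \not\ni i$ and relying crucially on the product structure of $\pi$; this is the point where independence of the coordinates in $\pi$ cannot be dispensed with.
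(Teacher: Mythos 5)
The paper does not actually prove this statement (it is imported from [OSSS05]), so your attempt can only be measured against the standard argument, and there it has a genuine gap at exactly the step you flag as ``the hard part''. The per-coordinate estimate you need, namely $\mathbb{E}\bigl[\mathbf 1\{i\text{ revealed}\}\,\mathbf 1\{f(\omega^{(\sigma_i)})\ne f(\omega^{(\sigma_i-1)})\}\bigr]\le\delta_i(\mathsf T)\,\infl_i(f)$, is false in general. Take $I=\{1,2\}$ with uniform bits, $f(\omega)=\omega_1\omega_2$, and the algorithm that reveals coordinate $1$ and reveals coordinate $2$ only when $\omega_1=1$: then $\delta_2\,\infl_2(f)=\tfrac12\cdot\tfrac14=\tfrac18$, while your term for $i=2$ equals $\mathbb P[\omega_1=1]\,\mathbb P[\omega_2\ne\omega_2']=\tfrac14$. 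The tower identity cannot rescue this: the average of conditional influences you would form runs only over the branches on which $i$ is revealed (an event of total mass $\delta_i$ defined by the revealed values, with a conditioning set that varies from branch to branch), and this restricted average is biased precisely towards histories in which coordinate $i$ matters more; $\mathbb E[\infl_i(f\mid\omega_A)]=\infl_i(f)$ only holds for a fixed $A$ and an unrestricted average. Worse, in this example your telescoped upper bound for $\mathbb E|f(\omega)-f(\omega')|$ equals $\tfrac14+\tfrac14=\tfrac12$, which already exceeds $\sum_i\delta_i\infl_i(f)=\tfrac38$, so the loss occurs at the triangle-inequality stage and cannot be repaired by any cleverer treatment of the per-coordinate terms in the orientation you chose.

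The missing idea in the OSSS proof is to orient the hybrids the other way, so that the coordinates carrying the biased (revealed) values are the ones that get \emph{replaced by fresh samples}. Run $\mathsf T$ on $\omega$, let $z^{(t)}$ agree with the independent copy $\omega'$ on the first $t$ revealed coordinates and with $\omega$ elsewhere, and start from $\var_\pi(f)=\mathbb E\bigl[f(\omega)\bigl(f(z^{(0)})-f(z^{(\tau)})\bigr)\bigr]$; this identity holds because $f(\omega)$ is determined by the query history ($\mathsf T$ determines $f$) while, conditionally on that history, $z^{(\tau)}$ has law exactly $\pi$ (its revealed coordinates are re-randomized, its unrevealed ones are unbiased), so $\mathbb E[f(\omega)f(z^{(\tau)})]=\mathbb E[f]^2$. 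Telescoping gives $\var_\pi(f)\le\sum_t\mathbb E\,|f(z^{(t-1)})-f(z^{(t)})|$, and now, conditionally on the history up to step $t-1$, the pair $(z^{(t-1)},z^{(t)})$ is an honest $\pi$-sample together with its resampling at the coordinate $e_t$ queried at step $t$ --- no conditioning on values survives, since the previously revealed coordinates carry $\omega'$-values in both configurations --- so each step contributes exactly $\infl_{e_t}(f)$ and summing over steps yields $\sum_i\delta_i(\mathsf T)\,\infl_i(f)$. In your construction the revealed coordinates of the hybrids keep the $\omega$-values that steered the algorithm, which is precisely what produces the conditional influences you could not remove.
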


This inequality is used as follows. First, we write Poisson-Boolean percolation as a product space. Second, we exhibit an algorithm for the event $\{0\leftrightarrow\partial\mathsf B_r \}$ for which we control the revealments. Then, we use the assumption $\lambda > \widetilde\lambda_c$ to connect the influences of the product space to the derivative of $\theta_r$. Altogether, these steps lead to \eqref{eq:mlem}.

\paragraph{Organization of the article} The next section contains some preliminaries. Section~\ref{sec:3} contains the proof of Theorem~\ref{thm:main} while the last section contains the proofs of Theorems~\ref{thm:2} and \ref{thm:3}.

\section{Background}
We introduce some notation and recall three properties of the Poisson-Boolean percolation that we will need in the proofs of the next sections. 
\paragraph{Further notation} For $x\in \bbZ^d$, introduce the squared box $\mathsf S^{x}:= x+[-1/2,1/2)^d$ around~$x$. In order to apply the OSSS inequality, we wish to write our probability space as a product space. To do this, we introduce the following notation. For any integer $n\ge1$ and $x\in \bbZ^d$, let $$\eta_{(x,n)} := \eta\cap \big(\mathsf S^x\times [n-1,n)\big),$$ which corresponds to all the balls of $\eta$ centered at a point in $\mathsf S^x$ with radius in $[n-1,n)$. All the constants $c_i$ below (in particular in the lemmata) are independent of all the parameters.

\paragraph{Insertion tolerance} We will need the following insertion tolerance property. Consider ${\bf r_*}$ and ${\bf r^*}$ such that 
\begin{equation}\label{eq:radass}\tag{IT}
\bbP_\lambda[\calD_x]:=c_{\rm IT}=c_{\rm IT}(\lambda)>0,
\end{equation}
where $\calD_x$ is the event that there exists $(z,R)\in \eta$ with $z\in\mathsf S^x$ and
$\mathsf B_{\bf r_*}^x\subset \mathsf B_R^z\subset \mathsf B_{\bf r^*}^x$.
Without loss of generality (the radius distribution may be scaled by a constant factor), we further assume  that ${\bf r_*}$ and ${\bf r^*}$ satisfy the following conditions (these will be useful at different stages of the proof):
\begin{equation}
\label{eq:rass}
1+2\sqrt d\le {\bf r_*}\le{\bf r^*}\le 2{\bf r_*}-2\sqrt d.
\end{equation}
While the quantity $c_{\rm IT}$ varies with $\lambda$, the dependency will be continuous and therefore irrelevant for our arguments. We will omit to refer to this subtlety in the proofs to avoid confusion.
\paragraph{FKG inequality} An {\em increasing} event $A$ is an event such that $\eta\in A$ and $\eta\subset\eta'$ implies $\eta'\in A$. The FKG inequality for Poisson point processes  states that for every $\lambda>0$ and every two increasing events $A$ and $B$,
\begin{equation}\label{eq:FKG}\tag{FKG}
\bbP_\lambda[A\cap B]\ge \bbP_\lambda[A]\bbP_\lambda[B].
\end{equation}
\paragraph{Russo's formula}
For $x\in \bbZ^d$ and an increasing event $A$, define the random variable
\begin{equation}\label{eq:pivdef}
  \textrm{Piv}_{x,A}  (\eta) :=  \mathbf 1_{\eta \notin A}\int_{\mathsf S^x} \int_{\bbR_+} \mathbf 1_{\eta \cup (z,  r) \in A} \,dz \, \mu(d r).
\end{equation}
Russo's formula yields that
\begin{equation}\label{eq:russo}\tag{Russo}
\frac{d}{d\lambda}\bbP_\lambda [A] = \sum_{\substack{x \in \bbZ^d}} \bbE_\lambda [ \textrm{Piv}_{x,A} ].
\end{equation}

\section{Proof of Lemma~\ref{lem:mlem}}\label{sec:3}

The next subsection contains the proof of Lemma~\ref{lem:mlem} conditioned on two lemmata, which are proved in the next two subsections.
\subsection{Proof of Lemma~\ref{lem:mlem}}
As mentioned above, the proof of the lemma is obtained by applying the OSSS inequality to a truncated version of the probability space generated by the independent variables $(\eta_{(x,n)})_{x\in\bbZ^d,n\ge1}$.
In this section, we fix $L\ge 2r>0$. Set $\calA:=\{0\stackrel{}\longleftrightarrow \partial\mathsf B_r\}$ and $f={\bf 1}_\calA$. Define 
$$I_L:=\{(x,n)\in\bbZ^d\times\bbN\text{ such that } \|x\|\le L\text{ and }1\le n\le L\}.$$

Also, for $r\ge0$, let $\eta_{\bf g}$ denote the union of the $\eta_{(x,n)}$ for $(x,n)\notin I_L$. For $i$ being either $(x,n)$ or ${\bf g}$, set
$\Omega_i$ to be the space of possible $\eta_i$ and $\pi_i$ the law of $\eta_i$ under $\bbP_\lambda$. 

For $0\le s\le r$, apply the OSSS inequality (Theorem~\ref{thm:OSSS2}) to
\begin{itemize}[noitemsep,nolistsep]
\item the product space $(\prod_{i\in I}\Omega_i,\otimes_{i\in I}\pi_i)$ where 
$I:=I_L\cup\{{\bf g}\}$,
\item the indicator function $f=\mathbf 1_\calA$  considered as a function from $\prod_{i\in I}\Omega_i$ onto $\{0,1\}$, 
\item  the algorithm $\mathsf T_{s,L}$ defined below.
\end{itemize}
\begin{definition}[Algorithm $\mathsf T_{s,L}$]
  Fix a deterministic ordering of $I$. Set $i_0 = {\bf g}$, and reveal $\eta_{{\bf g}}$. At each step $t$, assume that $\{i_0,\dots,i_{t-1}\} \subset I$ has been revealed. Then,\begin{itemize}
\item 
If there exists $(x,n) \in I\setminus \{i_0,\dots,i_{t-1}\}$ such that 
 the Euclidean distance between $\mathsf S^x$ and the connected components of $\partial\mathsf B_s$ in $\calO(\eta_{i_1}\cup\dots\cup\eta_{i_{t-1}})$ is smaller than $n$,
then set $i_{t+1}$ to be the smallest such $(x,n)$ for some fixed ordering. 
\item If such an $(x,n)$ does not exist, halt the algorithm.\end{itemize}
\end{definition}
\begin{remark}Roughly speaking, the algorithm checks $\calO(\eta_{\bf g})$ and then discovers the connected components of $\partial\mathsf B_s$.
\end{remark}
Theorem \ref{thm:OSSS2} implies that 
\begin{equation}\label{eq:OSSSa}
\theta_{r}(1-\theta_{r}) \leq 2\sum_{i\in I} \delta_{i}(\mathsf T_{s,L}) \infl_i(f).
\end{equation}
\medbreak
By construction, the random variable $\eta_{\bf g}$ is automatically revealed so  its revealment is 1. Also, 
\begin{equation}\label{eq:r1}\infl_{\bf g}(f)\le \bbP_\lambda[\exists (z,R)\in\eta\text{ with $R\ge L$ and $\mathsf B_R^z\cap \mathsf B_r\ne \emptyset$}]\end{equation}
so that this quantity tends to 0 as $L$ tends to infinity (thanks to the moment assumption on $\mu$). 

Let us now bound the revealment for $(x,n)\in I_L$. The random variable $\eta_{(x,n)}$ is revealed by $\mathsf T_{s,L}$ if the Euclidean distance between $\mathsf S^x$ and the connected component of $\partial \mathsf B_s$ is smaller than $n$. Let $\mathsf S^x_n$ be the union of the boxes $\mathsf S^y$ that contain a point at distance exactly $n$ from $\mathsf S^x$. We deduce that
\begin{equation}\label{eq:r2}\delta_{(x,n)}(\mathsf T_{s,L})\le \bbP_\lambda[\mathsf S^x_n\stackrel{}{\longleftrightarrow}\partial\mathsf B_s].\end{equation}
Overall,  plugging the previous bounds \eqref{eq:r1} and \eqref{eq:r2} on the revealments of the algorithms $\mathsf T_{s,L}$ into \eqref{eq:OSSSa}, we obtain
\begin{equation*}
  \theta_{r}(1-\theta_{r}) \le  2\sum_{(x,n)\in I_L}\bbP_\lambda[\mathsf S^x_n\stackrel{}{\longleftrightarrow}\partial\mathsf B_s]\cdot \infl_{(x,n)}(f) +o(1).
\end{equation*}
(Above, $o(1)$ denotes a quantity tending to $0$ as $L$ tends to infinity.)
Letting $L$ tend to infinity (note that the influence $\infl_{(x,n)}$ does not depend on the algorithms $\mathsf T_{s,L}$), we find
\begin{align}\label{eq:op}\theta_{r}(1-\theta_{r}) &\le 2\sum_{\substack{x\in \bbZ^d\\ n\ge1}}\bbP_\lambda[\mathsf S^x_n\stackrel{}{\longleftrightarrow}\partial\mathsf B_s]\cdot \infl_{(x,n)}(f).
\end{align}
In order to conclude the proof, we need the following two lemmata. First, a simple union bound argument allows us to bound $\bbP_\lambda[\mathsf S^x_n\stackrel{}{\longleftrightarrow}\partial\mathsf B_s]$ in terms of the one-arm probability. More precisely, consider the following lemma, which will be proved at the end of the section.

\begin{lemma}\label{lem:1}
  Fix $\lambda_0>0$. There exists $c_2>0$ such that for every $\lambda\ge\lambda_0$, $x\in\bbZ^d$ and $n\ge1$,
  \begin{equation*}\int_0^r\bbP_\lambda[\mathsf S^x_n\stackrel{}{\longleftrightarrow}\partial\mathsf B_s]ds\le c_2 n^{d-1}\Sigma_r(\lambda).\end{equation*}
\end{lemma}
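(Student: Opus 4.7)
The plan is to reduce the shell $\mathsf S^x_n$ to a sum of unit boxes $\mathsf S^y$ by a union bound, then to bound each probability $\bbP_\lambda[\mathsf S^y\longleftrightarrow \partial\mathsf B_s]$ by a translated one-arm probability $\theta_t$ (via insertion tolerance and \eqref{eq:FKG}), and finally to integrate with a case split on the position of $y$ relative to the target sphere.

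Write $\mathsf S^x_n$ as the union of its at most $c_d\,n^{d-1}$ unit subboxes $\mathsf S^y$; a union bound reduces the lemma to showing
\begin{equation*}
\int_0^r\bbP_\lambda\bigl[\mathsf S^y\longleftrightarrow\partial\mathsf B_s\bigr]\,ds\;\le\;c(\lambda_0)\,\Sigma_r(\lambda)\qquad\text{for every } y\in\bbZ^d.\qquad(\ast)
\end{equation*}
For the key bound, I would first prove
\begin{equation*}
\bbP_\lambda\bigl[\mathsf S^0\longleftrightarrow\partial\mathsf B_t\bigr]\;\le\;\theta_t(\lambda)/c_{\mathrm{IT}}\qquad (\dagger)
\end{equation*}
by observing that, on the insertion event $\calD_0$, the ball provided by \eqref{eq:radass} contains $\mathsf B_{\bf r_*}\supset \mathsf S^0\cup\{0\}$, so $\calD_0\cap\{\mathsf S^0\longleftrightarrow\partial\mathsf B_t\}\subset\{0\longleftrightarrow\partial\mathsf B_t\}$; since both events are increasing, \eqref{eq:FKG} yields $(\dagger)$. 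For general $y$, the translation by $-y$ sends $\mathsf S^y$ to $\mathsf S^0$ and $\partial\mathsf B_s$ to a sphere lying outside the origin-centered ball of radius $|\|y\|-s|$. When $|\|y\|-s|>\sqrt d$, any path between the two must therefore cross $\partial\mathsf B_{|\|y\|-s|-\sqrt d/2}$, so $(\dagger)$ gives $\bbP_\lambda[\mathsf S^y\longleftrightarrow\partial\mathsf B_s]\le c_{\mathrm{IT}}^{-1}\theta_{|\|y\|-s|-\sqrt d/2}(\lambda)$; I use the trivial bound $1$ when $|\|y\|-s|\le \sqrt d$.

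To conclude $(\ast)$, assume $r\ge 1$ (the case $r<1$ is routine) and note that $\Sigma_r(\lambda)\ge \theta_1(\lambda_0)>0$ by insertion tolerance. If $\|y\|\le 2r+\sqrt d$, a change of variables $u=|\|y\|-s|-\sqrt d/2$ bounds the integral by $2\sqrt d+c_{\mathrm{IT}}^{-1}(\Sigma_{\|y\|}(\lambda)+\Sigma_r(\lambda))$; the elementary estimate $\Sigma_{Nr}\le N\Sigma_r$ (from $\int_{kr}^{(k+1)r}\theta_s\,ds\le r\theta_{kr}\le\Sigma_r$), combined with the lower bound on $\Sigma_r$ to absorb the additive $2\sqrt d$, gives $\le C(\lambda_0,d)\Sigma_r$. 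If $\|y\|> 2r+\sqrt d$, then $|\|y\|-s|-\sqrt d/2\ge r$ for every $s\in[0,r]$, so by monotonicity of $\theta$ the integrand is $\le \theta_r(\lambda)/c_{\mathrm{IT}}$, yielding an integral $\le r\theta_r(\lambda)/c_{\mathrm{IT}}\le c_{\mathrm{IT}}^{-1}\Sigma_r(\lambda)$. The lemma follows by summing over the $O(n^{d-1})$ values of $y$.

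The main technical obstacle is the far regime $\|y\|\gg r$: a naive bound would produce a factor $\theta$ at an argument smaller than $r$, which is \emph{not} controlled by $\Sigma_r$. Monotonicity of $\theta$ together with $\Sigma_r\ge r\theta_r$ saves the day, since in this regime the relevant argument in fact \emph{exceeds} $r$, making $\theta\le \theta_r$ and $r\theta_r\le \Sigma_r$ enough to absorb the full integral.
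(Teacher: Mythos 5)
Your proof is correct and follows essentially the same route as the paper: a union bound over the unit boxes composing $\mathsf S^x_n$, insertion tolerance combined with \eqref{eq:FKG} to compare the box-to-sphere probability with the one-arm probability $\theta$, and an integration over $s$ exploiting that the relevant distance is $|s-\|y\||$, which is controlled by $\Sigma_r(\lambda)$. The only cosmetic differences are that the paper inserts a ball at $y$ itself (avoiding your $\sqrt d/2$ loss) and bounds the integral uniformly in $y$ by $2\Sigma_r(\lambda)$ through a shift and monotonicity, whereas you reach the same conclusion with a case split on $\|y\|$ and the estimate $\Sigma_{Nr}\le N\Sigma_r$; nothing substantial changes.
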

Integrating \eqref{eq:op} for radii between 0 and $r$ and using the lemma above gives
\begin{equation}
  \label{eq:4}
  r\theta_{r}(1-\theta_{r}) \le 2c_2\Sigma_r\  \sum_{\substack{x\in \bbZ^d\\ n\ge1}} n^{d-1}\infl_{(x,n)}(f).
\end{equation}
The most delicate step is to relate the influences in the equation above with the pivotal probabilities appearing in the derivative formula \eqref{eq:russo}. This is the content of the following lemma, which will be proved in Section~\ref{sec:lemma}.
\begin{lemma} \label{lem:infcomp}

There exists $c_3>0$ such that for every $x\in \bbZ^d$, every $n\ge1$ and every $\lambda>\widetilde\lambda_c$, 
\begin{equation*}
\sum_{x \in \bbZ^d} {\rm Inf}_{(x,n)}(f)\le c_3\, n^{4d-2}\, \mu[n-1,n]\sum_{\substack{x\in\bbZ^d}} \bbE_\lambda [ \emph{Piv}_{x,A} ] .
\end{equation*}
\end{lemma}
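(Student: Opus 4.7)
The target inequality will be obtained by a two-step reduction:
\[
\infl_{(x,n)}(f)\ \longrightarrow\ \text{pivotality of a single ball of radius in }[n-1,n)\ \longrightarrow\ \text{pivotality of a ball of radius }\mathbf{r}_*.
\]
The first arrow is a direct consequence of the Poissonian nature of the resampling in the product-space representation; the second is the delicate one and uses crucially the supercritical assumption $\lambda>\widetilde\lambda_c$, together with insertion tolerance~\eqref{eq:radass}.

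\textbf{From influences to large-radius pivotality.} Because $A$ is increasing and $\tilde\eta$ agrees with $\eta$ outside $\mathsf S^x\times[n-1,n)$, by symmetry of the resampling I will write
\[
\infl_{(x,n)}(f)\ \le\ 2\,\bbP_\lambda\!\bigl[\eta\notin A,\ \eta\cup\tilde\eta_{(x,n)}\in A\bigr].
\]
The resampled point process $\tilde\eta_{(x,n)}$ is Poisson on $\mathsf S^x\times[n-1,n)$ of intensity $\lambda\, dz\otimes d\mu$. Listing its points and applying Mecke's formula, together with~\eqref{eq:FKG} to discard the remaining balls (the multi-ball correction being subdominant under~\eqref{eq:7}), I expect, up to a universal constant,
\[
\infl_{(x,n)}(f)\ \le\ C\!\int_{\mathsf S^x}\!\int_{[n-1,n)}\!\bbP_\lambda\!\bigl[\eta\notin A,\ \eta\cup\{(z,t)\}\in A\bigr]\,d\mu(t)\,dz.
\]
Summing over $x\in\bbZ^d$ and using Fubini reduce the lemma to proving, uniformly over $t\in[n-1,n)$,
\[
\int_{\bbR^d}\!\bbP_\lambda\!\bigl[\eta\notin A,\ \eta\cup\{(z,t)\}\in A\bigr]\,dz\ \le\ c\, n^{4d-2}\sum_{y\in\bbZ^d}\bbE_\lambda\!\bigl[\textrm{Piv}_{y,A}\bigr].
\]

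\textbf{From large-radius pivotality to microscopic pivotality.} Fix $t\in[n-1,n)$ and suppose $\eta\notin A$ while $\eta\cup\{(z,t)\}\in A$. Then the ball $\mathsf B_t^z$ bridges two disjoint arms $\gamma_0,\gamma_\infty$ of $\calO(\eta)$ meeting $\partial\mathsf B_t^z$ at points $u$ and $v$ respectively. Since $\lambda>\widetilde\lambda_c$, the probabilities $\bbP_\lambda[\mathsf B_s\!\leftrightarrow\!\partial\mathsf B_{2s}]$ admit a uniform lower bound in $s>0$; combining this with~\eqref{eq:FKG} via a standard dyadic renormalization, one produces with positive probability a cluster $\calC$, sitting in an annulus around $\mathsf B_t^z$ of width $O(n)$, that links $u$ to a prescribed site $y\in\mathsf B_{Cn}^z$. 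Planting at $y$ a ball of radius $\mathbf r_*$ via~\eqref{eq:radass} then makes $(y,\mathbf r_*)$ pivotal for $A$. Parametrising the construction by $u,v\in\partial\mathsf B_t^z$ (two surface factors of order $n^{d-1}$), by the insertion site $y\in\mathsf B_{Cn}^z$ (a volume factor of order $n^d$), and by a meeting point between $\gamma_\infty$ and $\calC$ (another factor $n^d$), and absorbing the finitely many $c_{\mathrm{IT}}^{-1}$ coming from the insertions, I expect to recover exactly the multiplicity $n^{(d-1)+(d-1)+d+d}=n^{4d-2}$.

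\textbf{Main obstacle.} The crux is the second step: the quantitative transfer of a pivotal ball at scale $n$ to one at scale $\mathbf r_*$ at merely polynomial cost. This is where $\lambda>\widetilde\lambda_c$ enters essentially: without a uniform lower bound on crossing probabilities at every scale, one cannot ``simulate'' the large bridging ball by the ambient Poisson configuration plus one small inserted ball, and the polynomial factor $n^{4d-2}$ would degrade into an exponential one. The bookkeeping of the successive dyadic shells (each supplying a gluing probability via~\eqref{eq:FKG} and an integration domain) is the other source of technicality, and is what sets the precise exponent $4d-2$.
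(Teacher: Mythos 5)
Your high-level architecture does match the paper's (bound $\infl_{(x,n)}(f)$ by $\mu[n-1,n]$ times the probability of a ``pivotal ball'' of radius about $n$, then transfer this to pivotality at the microscopic scale ${\bf r_*}$ at polynomial cost $n^{4d-2}$, using $\lambda>\widetilde\lambda_c$ and \eqref{eq:radass}), but the step you yourself flag as the main obstacle is precisely the one you have not proved, and as sketched it would not go through. The assertion that above $\widetilde\lambda_c$ a ``standard dyadic renormalization'' plus \eqref{eq:FKG} links the point $u$ to a \emph{prescribed} site $y$ at distance $O(n)$ ``with positive probability'' is not available: the hypothesis $\lambda>\widetilde\lambda_c$ only yields $\inf_r\bbP_\lambda[\mathsf B_r\longleftrightarrow\partial\mathsf B_{2r}]>0$, hence by a union bound $\bbP_\lambda[\mathsf S^0\longleftrightarrow\partial\mathsf B_r]\ge c/r^{d-1}$, and gluing two one-arm events by FKG does not force the corresponding clusters to meet (there is no uniqueness or sprinkling argument to invoke here). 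The paper devotes Lemma~\ref{lem:tech} to exactly this point: $\bbP_\lambda\big[0\stackrel{\mathsf B_n}{\longleftrightarrow}\mathsf B^x_{\bf r^*}\big]\ge c_4\,n^{-(2d-2)}$ for $x\in\partial\mathsf B_n$, proved by an induction over scales in which the alternative to the path ending near the target is a single large ball whose cost is controlled by the $5d-3$ moment assumption. The resulting bound is polynomial, not uniformly positive, and this polynomial loss --- combined with a union over $O(n^d)$ candidate sites $y$ and an $O(n^d)$ overcounting when summing over $x$ --- is the actual source of the exponent $4d-2$; your surface/volume bookkeeping reproduces the number but not the mechanism.

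The second gap is that pivotality is not an increasing event. To make $(y,{\bf r_*})$ pivotal you must realize the connections from $0$ and from $\partial\mathsf B_r$ to the ${\bf r_*}$-neighbourhood of $y$ while \emph{preserving} the decreasing constraint $\{0\longleftrightarrow\partial\mathsf B_r\}^c$; FKG cannot deliver an increasing connection event together with this constraint, and worse, the balls you add to build the linking cluster $\calC$ may connect $\gamma_0$ to $\gamma_\infty$ directly and destroy pivotality altogether. The paper circumvents this by conditioning on the cluster ${\bf C}$ of the origin and using only the Poisson points whose balls are entirely contained in $\bbR^d\setminus{\bf C}$ (this is why Lemma~\ref{lem:tech} is stated for the restricted connectivity $\stackrel{\mathsf B_r}{\longleftrightarrow}$), so that the newly exploited connections cannot enlarge the cluster of $0$; the claim in Section~\ref{sec:lemma} then compares, for each candidate $y$, the event that ${\bf C}$ and ${\bf C}'$ come within ${\bf r^*}$ of each other with the event that they stay apart. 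Your sketch contains no mechanism of this kind, so the central inequality behind the factor $n^{4d-2}$ remains unproved.
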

Dividing~\eqref{eq:4} by $\Sigma_r$ and applying the lemma above  gives
\begin{align*}
\frac r{\Sigma_r}\theta_{r}(1-\theta_{r}) \leq \,&2c_2c_3\sum_{n\ge1} n^{5d-3}\mu[n-1,n] \sum_{x\in\bbZ^d}\bbE_\lambda [ \textrm{Piv}_{x,A}] \\
\le \,&c_1\sum_{x\in\bbZ^d}\bbE_\lambda [ \textrm{Piv}_{x,A} ] \stackrel{\eqref{eq:russo}}{=}\, c_1 \theta'_r.
\end{align*}
This implies Lemma~\ref{lem:mlem}. Before proving lemmata~\ref{lem:1} and \ref{lem:infcomp}, we would like to make several remarks concerning the proof above.

\begin{remark}
The interest of working with $\mathsf T_{s,L}$ is to have a finite number of coordinates for the algorithm. We could have stated an OSSS inequality valid for countably many states and get \eqref{eq:op} directly, but we believe the previous strategy to be shorter and thriftier.
\end{remark}

\begin{remark}
Lemma~\ref{lem:1} may a priori be improved. Indeed, the union bound in the first inequality is quite wasteful. Nonetheless, with the moment assumption on the radius distribution, the claim above is sufficient.
\end{remark}

\begin{remark}
Lemma~\ref{lem:infcomp} is slightly too strong. We could replace $n^{-2d-1}$ by any sequence $a_n$ such $a_n n^{2d-1}$ is summable.  If Lemma~\ref{lem:1} was improved by replacing $n^{d-1}$ by a sequence $b_n$ going to infinity more slowly, then the condition on $a_n$ would become that $n^da_nb_n $ is summable. This is an observation to keep in mind if one wants to improve the moment assumption. Here, we favored the simplest proof possible and did not try to optimize the two lemmata.
\end{remark}

\begin{remark}
We refer to $\lambda\ge \lambda_0$ in Lemma~\ref{lem:1} instead of $\lambda>\widetilde\lambda_c$ (even though the lemma is anyway used in this context) to illustrate that the only place where $\lambda>\widetilde\lambda_c$ is used is in Lemma~\ref{lem:infcomp}.
\end{remark}

We finish this section with the proof of Lemma~\ref{lem:1}.

\begin{proof}[Proof of Lemma~\ref{lem:1}]
Let $Y$ be the subset of $\bbZ^d$ such that 
$\displaystyle\mathsf S^x_n=\bigcup_{y\in Y}\mathsf S^y.$
We have that 
\begin{align*}\bbP_\lambda[\mathsf S^x_n\stackrel{}{\longleftrightarrow}\partial\mathsf B_s]&\le \sum_{y\in Y}\bbP_\lambda[\mathsf S^y\stackrel{}{\longleftrightarrow}\partial\mathsf B_s]\le \frac1{c_{\rm IT}}\sum_{y\in Y}\bbP_\lambda[y\stackrel{}{\longleftrightarrow}\partial\mathsf B_s].
\end{align*}
where in the last inequality, we used the FKG inequality, \eqref{eq:radass} and the fact that if $\mathsf S^y\stackrel{}{\longleftrightarrow}\partial\mathsf B_s$ and the event $\mathcal D_y$ defined below \eqref{eq:radass} occur, then $y$ is connected to $\partial\mathsf B_s$. 

Integrating on $s$ between $0$ and $r$ and observing that $y$ is at distance $|s-\|y\||$ of $\partial\mathsf B_s$, we deduce that
$$\int_0^r\bbP_\lambda[y\stackrel{}{\longleftrightarrow}\partial\mathsf B_s]ds\le\int_0^r\theta_{|s-\|y\||}(\lambda)ds\le 2\Sigma_r(\lambda).
 $$
Since the cardinality of $Y$  is bounded by a constant times $n^{d-1}$, the result follows. 
\end{proof}
\subsection{A technical statement regarding connection probabilities above $\widetilde\lambda_c$}

The following lemma will be instrumental in the proof of Lemma~\ref{lem:infcomp}. It is the unique place where we use the assumption $\lambda>\widetilde\lambda_c$. 

Below, $X\stackrel{Z}{\longleftrightarrow}Y$ means that $X$ is connected to $Y$ in $\calO(\eta^Z)$, where $\eta^Z$ is the set of $(z,R)\in \eta$ such that $\mathsf B_z(R)\subset Z$. 
We highlight that this is not the same as the existence of a continuous path in $\calO(\eta)\cap Z$ connecting $x$ and $y$, since such a path could a priori pass through regions in $Z$ which are only covered by balls intersecting $\bbR^d\setminus Z$.

\begin{lemma}\label{lem:tech}
There exists a constant $c_4>0$ such that for every $\lambda > \widetilde\lambda_c $ and $r\ge {\bf r^*}$, 
\begin{equation}\label{eq:connect}
\bbP_\lambda \big[ 0 \stackrel{\mathsf B_r}{\longleftrightarrow} \mathsf B_{\bf r^*}^x \big]  \geq \frac{c_4}{r^{2d - 2 }}\qquad\text{for every } x\in\partial\mathsf B_r.
\end{equation}
\end{lemma}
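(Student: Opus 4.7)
I would combine the rotational invariance of $\bbP_\lambda$, the $\widetilde\lambda_c$ box-crossing property, and insertion tolerance via the FKG inequality, exploiting a $\widetilde\lambda_c$ event whose outer sphere passes symmetrically through both $0$ and $x$.

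Concretely, let $c = x/2$ be the midpoint of $[0,x]$, so that $\|0-c\| = \|x-c\| = r/2$ and both $0$ and $x$ lie on the sphere $\partial\mathsf B_{r/2}(c)$. Applying $\widetilde\lambda_c$ at scale $r/4$ centered at $c$ gives
\[
\bbP_\lambda[\mathsf B_{r/4}(c)\leftrightarrow \partial\mathsf B_{r/2}(c)]\;\ge\;p>0,
\]
and any realizing cluster must cross $\partial\mathsf B_{r/2}(c)$. Covering this sphere by $O(r^{d-1})$ translates of $\mathsf B_{\bf r^*}$ and using rotational symmetry about $c$ yields, uniformly in $y\in\partial\mathsf B_{r/2}(c)$,
\[
\bbP_\lambda[\mathsf B_{r/4}(c)\leftrightarrow \mathsf B_{\bf r^*}^y]\;\ge\;\frac{cp}{r^{d-1}}.
\]
In particular this holds for both $y=0$ and $y=x$. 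The two increasing events $A = \{\mathsf B_{r/4}(c)\leftrightarrow\mathsf B_{\bf r^*}\}$ and $B = \{\mathsf B_{r/4}(c)\leftrightarrow\mathsf B_{\bf r^*}^x\}$ then satisfy $\bbP_\lambda[A\cap B]\ge (cp)^2/r^{2(d-1)}$ by FKG. To force the two half-clusters (one reaching $\mathsf B_{\bf r^*}$ near $0$, the other reaching $\mathsf B_{\bf r^*}^x$) to merge into a single cluster, I would anchor them at $c$ through the insertion tolerance event $\calD_c$ (covering $\mathsf B_{\bf r_*}^c$ at cost $c_{\rm IT}$), after appropriately strengthening $A$ and $B$ so the anchor is effective. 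Finally, insertion tolerance at $0$ passes from $\mathsf B_{\bf r^*}\leftrightarrow\mathsf B_{\bf r^*}^x$ to $0 \leftrightarrow \mathsf B_{\bf r^*}^x$, and the restriction $\stackrel{\mathsf B_r}{\longleftrightarrow}$ (keeping only balls fully contained in $\mathsf B_r$) costs a bounded factor thanks to the moment assumption on $\mu$ controlling the tail of the radius distribution.

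\textbf{Main obstacle.} The genuinely delicate step is the anchoring. The two clusters provided by $A\cap B$ only touch the large ball $\mathsf B_{r/4}(c)$, whereas the insertion-tolerance ball sits in the much smaller $\mathsf B_{\bf r^*}^c \subset \mathsf B_{r/4}(c)$; a naive strengthening that forces both clusters to enter $\mathsf B_{\bf r_*}^c$ directly loses a polynomial factor in $r$ and breaks the exponent $2d-2$. The resolution should combine FKG with a more careful topological/geometric argument --- using that a crossing cluster of the annulus $\mathsf B_{r/2}(c)\setminus\mathsf B_{r/4}(c)$ and a cluster growing from the insertion ball at $c$ can be chained at bounded cost through a local anchoring event rather than a global one --- so that the two halves are glued into one cluster without losing more than a constant factor. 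Verifying this while also tracking the restriction to $\mathsf B_r$ via the moment assumption on $\mu$ is the technical heart of the proof.
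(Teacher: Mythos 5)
There is a genuine gap, and it sits exactly where you flag your ``main obstacle''. The events $A=\{\mathsf B_{r/4}^c\leftrightarrow\mathsf B_{\bf r^*}\}$ and $B=\{\mathsf B_{r/4}^c\leftrightarrow\mathsf B_{\bf r^*}^x\}$ only produce two clusters, each touching the ball $\mathsf B_{r/4}^c$ \emph{somewhere}; their contact points can be at mutual distance of order $r$, so no insertion event at the single site $c$, nor any ``local anchoring event'', merges them at constant cost. At this stage of the paper one only knows $\lambda>\widetilde\lambda_c$, not $\lambda>\lambda_c$, so there is no uniqueness or density of large clusters to invoke, and gluing two clusters at distance of order $r$ is a priori as expensive as the connection probability one is trying to bound. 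The natural repair --- strengthening $A$ and $B$ so that the clusters reach the insertion ball $\mathsf B_{\bf r_*}^c$ --- turns each of them into a point-to-prescribed-boundary-ball event at scale $r/2$, i.e.\ precisely the quantity the lemma is about, so the argument becomes circular. (Your covering-plus-rotational-invariance step, giving $\bbP_\lambda[\mathsf B_{r/4}^c\leftrightarrow\mathsf B_{\bf r^*}^y]\ge c/r^{d-1}$ uniformly on the sphere, is fine and is the analogue of \eqref{eq:a}; the problem is everything after it.)

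The second gap is the closing claim that the restriction to $\stackrel{\mathsf B_r}{\longleftrightarrow}$ ``costs a bounded factor thanks to the moment assumption''. It does not: a realized connection may genuinely rely on balls exiting $\mathsf B_r$, and one cannot simply delete them; moreover the box-crossing input at $\lambda>\widetilde\lambda_c$ is itself a statement about the unrestricted process. Handling this restriction is the technical heart of the paper's proof: there one decomposes $\{\mathsf S^0\leftrightarrow\partial\mathsf B_r\}$ according to whether a single ball straddles from some interior ball $\mathsf B_{\bf r_*}^y$ to $\partial\mathsf B_r$, uses the independence of that straddling ball from the $\mathsf B_r$-restricted connection together with the $5d-3$ moment to bound its cost by $c\,\overline r^{-(4d-2)}$ (with $\overline r=r-\|y\|$), and then runs an induction over scales on $U(r)=\bbP_\lambda[\mathsf S^0\stackrel{\mathsf B_r}{\leftrightarrow}\mathsf B_{{\bf r^{**}}}^x]$, seeded by $U(r)=1$ for $r\le{\bf r^{**}}$, to propagate the bound $c/r^{2d-2}$. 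Your outline has no analogue of this renormalization mechanism, and without it both the gluing loss and the restriction loss are uncontrolled, so the exponent $2d-2$ is not obtained.
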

\begin{remark}Before diving into the proof, let us first explain how we will use the assumption $\lambda>\widetilde\lambda_c$. Fix $r>0$. If $\mathsf B_r$ is connected to $ \partial\mathsf B_{2r}$, then there must exist $x\in\bbZ^d$ with $\mathsf S^x\cap\partial\mathsf B_r\ne \emptyset$ such that $\mathsf S^x$ is connected to $\partial\mathsf B_{2r}$.  
Since above $\widetilde\lambda_c$, the probability of the former is bounded away from 0 uniformly in $r$, and since 
the probabilities of the latter events are all smaller than $\bbP_\lambda[\mathsf S^0\longleftrightarrow \partial\mathsf B_r]$, the union bound gives
\begin{equation}\label{eq:a}\bbP_\lambda[\mathsf S^0\longleftrightarrow \partial\mathsf B_r]\ge \frac{c_5}{r^{d-1}}.\end{equation}\end{remark}

The argument will rely on the following observation. As explained in \eqref{eq:a}, the probability that $\mathsf S^0$ is connected to $\partial\mathsf B_r$ does not decay quickly. This event implies the existence of $y$ such that $\mathsf S^0$ is connected in $\mathsf B_r$ to $\mathsf B_{\bf r_*}^y$, and one ball $\mathsf B_R^z$ (with $(z,R)\in\eta$) covering $\mathsf B_{\bf r_*}^y$ and intersecting the complement of $\mathsf B_R^z$. The problem is that this site $y$ may be quite far from $\partial\mathsf B_r$. Nonetheless, this seems unlikely since the cost (by the moment assumption on $\mu$) of having a large ball $\mathsf B_R^z$ intersecting $\mathsf B_{{\bf r}_*}^y$ is overwhelmed by the probability that the latter is connected to $\partial\mathsf B_r$ in $\mathsf B_{r-\|y\|}^y$ by a path in $\calO(\eta)$ using a priori smaller balls (and maybe even only balls included in $\mathsf B_r$).
The proof below will harvest this idea though with some important variations due to the fact that all the balls under consideration must remain inside $\mathsf B_r$. One of the key idea is the introduction of a new scale ${\bf r}^{**}$ and an induction on the probability that $\mathsf S^0$ is connected to $\mathsf B_{{\bf r}^{**}}^x$ in $\mathsf B_r$, where $x\in\partial\mathsf B_r$.

 \begin{proof}
We will prove that there exist ${\bf r^{**}}>0$ and $c_6>0$ such that 
\begin{equation}\label{eq:aux2}
\bbP_\lambda \big[ \mathsf S^0 \stackrel{\mathsf B_r}{\longleftrightarrow} \mathsf B_{{\bf r^{**}}}^x \big]  \geq \frac{c_6}{r^{2d - 2 }}\qquad\text{for every }x\in\mathsf \partial B_r.
\end{equation}
This is amply sufficient to prove the lemma since, if $T$ denotes the set of $t\in\bbZ^d$ such that $\mathsf S^t\cap \mathsf B_{2{\bf r^{**}}}^x\ne \emptyset$ and $\mathsf B^z_{\bf r^*}\subset \mathsf B_r$, then 
$$\bbP_\lambda \big[0 \stackrel{\mathsf B_r}{\longleftrightarrow} \mathsf B^x_{\bf r^*} \big]\stackrel{\rm (FKG)}\ge \bbP_\lambda[\calD_0]\big(\prod_{t\in T}\bbP_\lambda[\calD_t]\big)\bbP_\lambda \big[ \mathsf S^0 \stackrel{\mathsf B_r}{\longleftrightarrow} \mathsf B_{{\bf r^{**}}}^x \big]\stackrel{\eqref{eq:radass}}\ge {c_{\rm IT}}^{|T|+1}\cdot \frac{c_6}{r^{2d - 2 }}.$$
We therefore focus on the proof of \eqref{eq:aux2}. We will fix ${\bf r^{**}}\ge 2 {\bf r^*}$ sufficiently large (see the end of the proof). Introduce $Y:=\bbZ^d\cap \mathsf B_{r-{\bf r^{**}}}$ and a finite set $Z \subset \partial \mathsf B_r$ such that the union of the balls $\mathsf B_{\bf r_*}^{y}$ and $
\mathsf B_{{\bf r^{**}}}^{z}$ with $y \in Y$ and $z \in Z$ cover the ball $\mathsf B_r$. Note that if $0$ is connected to $\partial\mathsf B_r$, 
then either one of the $z\in Z$ is such that $0$ is connected to $\mathsf B_{{\bf r^{**}}}^z$ in $\mathsf B_r$, or there exists $y\in
Y$ such that the event 
$$
\calA(y) := \big\lbrace \mathsf S^0 \stackrel{\mathsf B_r}{\longleftrightarrow} \mathsf B_{\bf r_*}^{y}  \big \rbrace \cap 
\big\lbrace \exists(u,R)\in\eta \text{ such that $\mathsf B_R^u$ intersects both $\mathsf B_{\bf r_*}^{y}$ and $\partial\mathsf B_r$}\big\rbrace
$$
occurs.
The union bound therefore implies that
\begin{equation}\label{eq:1}\sum_{z\in Z}\bbP_\lambda[\mathsf S^0 \stackrel{\mathsf B_r}{\longleftrightarrow} \mathsf B_{{\bf r^{**}}}^z]+\sum_{\substack{y\in Y}}\bbP_\lambda[\calA(y)]\ge \bbP_\lambda[\mathsf S^0\longleftrightarrow \partial\mathsf B_r]\stackrel{\eqref{eq:a}}\ge \frac{c_5}{r^{d-1}}.\end{equation}
For $y\in Y$, introduce  $x:=(r/\|y\|)y\in\partial\mathsf B_r$ and $\overline r=r-\|y\|$.  The event on the right of the definition of $\calA(y)$ is independent of the event on the left. Since any point in $\mathsf B_{\bf r_*}^{y}$ is at a distance at least $\overline r-{\bf r_*}$ of $\partial\mathsf B_r$, we deduce that 
$$\bbP_\lambda[\calA(y)]\le \Big(c_7\int_{\overline r-{\bf r_*}}^\infty r^{d-1}\mu(dr)\Big)\cdot\bbP_\lambda[\mathsf S^0 \stackrel{\mathsf B_r}{\longleftrightarrow} \mathsf B_{\bf r_*}^{y}]\le \frac{c_8}{\overline r^{\,4d-2}}\,\bbP_\lambda[\mathsf S^0 \stackrel{\mathsf B_r}{\longleftrightarrow} \mathsf B_{\bf r_*}^{y}].$$
In the second inequality, we used  the moment assumption on $\mu$ and the fact that $\overline r-{\bf r_*}\ge \overline r/2$ since ${\bf r^{**}}\ge2{\bf r^*}$. 
Using this latter assumption one more time
 implies that
\begin{align}\nonumber \bbP_\lambda[\mathsf S^0 \stackrel{\mathsf B_r}{\longleftrightarrow} \mathsf B_{{\bf r^{**}}}^x]&\stackrel{\rm (FKG)}\ge \bbP_\lambda[\mathsf S^0 \stackrel{\mathsf B_r}{\longleftrightarrow} \mathsf B_{\bf r_*}^{y}]\cdot \bbP_\lambda[\mathcal D_y] \cdot \bbP_\lambda[\mathsf S^y \stackrel{\mathsf B_{\overline r}^y}{\longleftrightarrow} \mathsf B_{{\bf r^{**}}}^x]\\ 
\label{eq:3}&\stackrel{\phantom{\rm (FKG)}}\ge \frac{{c_{\rm IT}}\, \overline r^{\,4d-2}}{c_8}\cdot\bbP_\lambda[\calA(y)]\cdot \bbP_\lambda[\mathsf S^y \stackrel{\mathsf B_{\overline r}^y}{\longleftrightarrow} \mathsf B_{{\bf r^{**}}}^x].
\end{align}
From now on, define $U(r):= \bbP_\lambda [ \mathsf S^0 \stackrel{\mathsf B_r}{\longleftrightarrow} \mathsf B_{{\bf r^{**}}}^x ]$ for $x\in\partial\mathsf B_r$ (the choice of $x$ is irrelevant by invariance under the rotations). 
Now, one may choose $Z$ in such a way that $|Z|\le c_{9}r^{d-1}$. Plugging \eqref{eq:3} in \eqref{eq:1}, we obtain the following inequality
$$ c_{9}r^{d-1}\,U(r)+c_{10}\sum_{\substack{y\in Y}}\frac{U(r)}{U(r-\|y\|)(r-\|y\|)^{4d-2}}\ge \frac{c_5}{r^{d-1}}.$$
Using that $U(r)=1$ for every $r\leq {\bf r^{**}}$, we deduce \eqref{eq:aux2} by induction, provided ${\bf r^{**}}$ is chosen large enough to start with.
\end{proof}

\subsection{Proof of Lemma~\ref{lem:infcomp}}\label{sec:lemma}
We are now in a position to prove Lemma~\ref{lem:infcomp}. Fix $x \in \bbZ^d$ and $r,n\ge1$. Define 
$$\calP_x(n):=\{0\longleftrightarrow \mathsf B_{n}^x\}\cap\{\mathsf B_n^x \longleftrightarrow \partial \mathsf B_r\}\cap\{0 \longleftrightarrow \partial \mathsf B_r\}^c.$$
Introduce ${\bf C}={\bf C}(\eta) := \{z \in \bbR^d: z \stackrel{}{\longleftrightarrow} 0\}$ and ${\bf C}'={\bf C}'(\eta):=\{z \in\bbR^d: z \stackrel{}{\longleftrightarrow} \partial\mathsf B_r\}$ the connected components of 0 and $\partial\mathsf B_r$ in $\calO(\eta)$.  
 Our first goal is to prove that, conditionally on $\calP_x(n)$, the probability that ${\bf C}$ and ${\bf C}'$ are close to each other in $\mathsf B^x_{3n}$ is not too small.  
 \paragraph{Claim.} {\em We have that  }
\begin{equation*}\label{eq:bb}\bbP_\lambda[\calP_x(n)\text{ and }d({\bf C}\cap\mathsf B^x_{3n},{\bf C}')<{\bf r^{*}}]\ge \frac{c_{11}}{n^{3d-2}}\cdot\bbP_\lambda[\calP_x(n)].\end{equation*}
\bigbreak
\noindent Before proving this claim, let us conclude the proof of the lemma. If the event on the left occurs, there must exist $y\in \bbZ^d$ within a distance at most  ${\bf r_*}$ of both ${\bf C}\cap\mathsf B^x_{3n}$ and ${\bf C}'$: simply pick a point $z\in\bbR^d$ at a distance smaller than ${\bf r^*}/2$ of both ${\bf C}\cap\mathsf B^x_{3n}$ and ${\bf C'}$, and then take $y\in\bbZ^d$ within distance $\sqrt d$ of it (we used the inequality on the right in \eqref{eq:rass}). 
In particular, $\calP_y({\bf r_*})$ must occur for this $y$ and we deduce that 
\begin{equation}\label{eq:cccc}\bbP_\lambda[\calP_x(n)]\le c_{12}\,n^{3d-2}\,\sum_{\substack{y\in\bbZ^d\\ \|y-x\|\le 3n+{\bf r_*}}}\bbP_\lambda[\calP_y({\bf r_*})].\end{equation}
To conclude, observe that the definition of ${\bf r^*}$ given by \eqref{eq:radass} implies that  for all $y$,
\begin{equation}\label{eq:e}{c_{\rm IT}}\,\bbP_\lambda[\calP_y({\bf r_*})]\le \bbE_\lambda[{\rm Piv}_{y,A}].\end{equation}
Now, one easily gets that 
 \begin{equation}\label{eq:d}
 \infl_{(x,n)}(f) \le \lambda \mu[n-1,n]\times \bbP_\lambda[ \calP_x(n+\sqrt d)].
 \end{equation}
(Simply observe that $\calP_x(n+\sqrt d)$ must occur in $\eta$, and that the resampled Poisson point process $\tilde\eta_{(x,n)}$ must contain at least one point). Plugging \eqref{eq:cccc} (applied to $n+\sqrt d$) in \eqref{eq:d}, then using \eqref{eq:e}, and finally summing over every $x\in\bbZ^d$ gives the lemma. 

\begin{proof}[Proof of the claim] 
The proof consists in expressing the probability that ${\bf C}$ and ${\bf C'}$ come within a distance ${\bf r^*}$ of each other in terms of the probability that they remain at a distance at least ${\bf r^*}$ of each other.

 Fix $y\in\bbZ^d$.
For a subset $C$ of $\bbR^d$, let $u_C=u_C(y)$ be the point of $\mathsf S^y$ furthest to $C$, and $v_C=v_C(y)$ the point of $C$ closest to $u_C$. Consider the event
\begin{align*}
\calE_y&:=\{{\bf C}\cap\mathsf B^x_{n}\ne\emptyset\text{ and }d(u_{\bf C},{\bf C})\ge{\bf r^*}\}.
\end{align*}
Note that the event $\calE_y$ is measurable in terms of ${\bf C}$. 
Let us study,  on $\calE_y$, the conditional expectation with respect to ${\bf C}$. Introduce the three events
\begin{align*}
\calF_y:=\calD_{u_{\bf C}}
\qquad\calG_y:=\{u_{\bf C}\stackrel{\bbR^d\setminus{\bf C}}{\longleftrightarrow}\mathsf B_{v_{\bf C}}({\bf r^*})\}
\qquad&\calH_y:=\{\mathsf B^x_{n}\cap\mathsf S^y\stackrel{\bbR^d\setminus{\bf C}}{\longleftrightarrow}\partial\mathsf B_r\}.
\end{align*}

On the event $\calE_y$, conditioned on ${\bf C}$, $\eta^{\bbR^d\setminus{\bf C}}$ has the same law as $\tilde\eta^{\bbR^d\setminus {\bf C}}$ for some independent realization $\tilde\eta$ of the Poisson point process (recall the definition of $\eta^Z$ from the previous section). Also, the distance  between $u_{\bf C}$ and $v_{\bf C}$ (or equivalently ${\bf C}$) is larger than ${\bf r^*}$. Therefore, we may use \eqref{eq:radass}, Lemma~\ref{lem:tech} and the FKG inequality to get the following inequality between conditional expectations: 
\begin{align}\label{eq:r}
\bbP_\lambda[\calF_y\cap\calG_y\cap\calH_y|{\bf C}]&\ge \bbP_\lambda[\calF_y|{\bf C}]\cdot \bbP_\lambda[\calG_y|{\bf C}]\cdot \bbP_\lambda[\calH_y|{\bf C}]\ge  {c_{\rm IT}}\,\frac{c_4}{n^{2d-2}}\bbP_\lambda[\calH_y|{\bf C}]\quad\text{a.s. on $\calE$.}
\end{align}

Now, observe that if $\calE_y$ and $\calH_y$ occur simultaneously, then $\calP_x(n)$ occurs (we use ${\bf r^*}\ge \sqrt d$). Furthermore if $\calF_y$ and $\calG_y$ also occur simultaneously with them, then $\mathsf B^{v_{\bf C}}_{\bf r^*}\cap{\bf C}'\ne \emptyset$. Since by construction $v_{\bf C}\in \mathsf B^x_{3n}$, we deduce that $d({\bf C}\cap\mathsf B^x_{3n},{\bf C}')<{\bf r^*}$. Integrating \eqref{eq:r} on $\calE_y$, we deduce that 
\begin{align*}
\bbP_\lambda[\calP_x(n)\text{ and }d({\bf C}\cap\mathsf B^x_{3n},{\bf C}')<{\bf r^*}]&\ge\bbP_\lambda[\calE_y\cap\calF_y\cap\calG_y\cap\calH_y]\ge \frac{{c_{\rm IT}}\, c_4}{n^{2d-2}}\,\bbP_\lambda[\calE_y\cap\calH_y].
\end{align*}
Observe that if $\calP_x(n)$ and $d({\bf C}\cap\mathsf B^x_{n},{\bf C}')\ge {\bf r^*}$ occur, then there exists $y\in \bbZ^d$ such that the event on the right-hand side occurs. In particular, $\mathsf S^y$ must intersect $\mathsf B^x_{n}$ for $\calH_y$ to occur, so that there are $c_6n^d$ possible values for $y$. Summing over all these values, we therefore get that 
\begin{align*}c_6 n^d\,\bbP_\lambda[\calP_x(n)\text{ and }d({\bf C}\cap\mathsf B^x_{3n},{\bf C}')< {\bf r^*}]&\ge \frac{{c_{\rm IT}}\, c_4}{n^{2d-2}}\,\bbP_\lambda[\calP_x(n)\text{ and }d({\bf C}\cap\mathsf B^x_{n},{\bf C}')\ge {\bf r^*}],\end{align*}
which implies the claim readily.
\end{proof}
\section{Renormalization of crossing probabilities}
\label{sec:renorm-ofcr-prob}
In this section, we prove Theorems~\ref{thm:2} and \ref{thm:3}. The proof is quite different in the regime where $\mu[r,\infty]$ decays exponentially fast (light tail), and in the regime where it does not (heavy tail).  Also, since in the heavy tail regime the renormalization argument performed below may be of some use in the study of other models, or for different distributions $\mu$, we prove a quantitative lemma which we believe to be of independent interest.

In this section, we reboot the count for the constants $c_i$ starting from $c_1$. The section is organized as follows. In Section~\ref{sec:4.1}, we prove a renormalization lemma which will enable us to derive the theorems. In Sections~\ref{sec:4.2} and \ref{sec:4.3}, we derive Theorem~\ref{thm:2} and Theorem~\ref{thm:3} respectively.

\subsection{The renormalization lemma}\label{sec:4.1}

Introduce, for every $\delta,\alpha,\lambda\ge0$ and $r\ge1$, the two functions
\begin{align*}
\pi_r^\delta(\lambda)&:=\bbP_\lambda[\exists (z,R)\in\eta\text{ such that } \mathsf B_R^z \cap \mathsf B_{2\delta r}\ne \emptyset\text{ and }\mathsf B_R^z \cap \partial \mathsf B_{(1-2\delta)r}\ne \emptyset],\\
\theta_r^\alpha(\lambda) &:=  \bbP_{\lambda} [\mathsf B_{\alpha r} \longleftrightarrow \partial \mathsf B_r].
\end{align*}
Note that $\pi_r^0(\lambda)=\phi_r(\lambda)$ and $\theta_r^0(\lambda)=\theta_r(\lambda)$ by definition. 

\begin{remark}
The quantity $\pi_r^\delta(\lambda)$ is expressed in terms of $\mu$ as follows: let $c_d$ be the area of the sphere of radius 1 in $\bbR^d$, then\begin{equation}\label{eq:p1}
\pi_r^\delta(\lambda)=1-\exp\Big(-\lambda c_d \int_0^\infty a^{d-1}\mu[|a-2\delta r|\vee |r-2\delta r-a|,\infty]da\Big).
\end{equation}
\end{remark}

 The following lemma will be the key to the proofs of the theorems.
\begin{lemma}[Renormalization inequality]
\label{lem:17}
For every $0<\alpha\le \delta\le 1/4$, there exists $c_1>0$ such that for every $\lambda,r>0$, \begin{equation}
  \label{eq:14}
  \theta^\alpha_r (\lambda)\le \pi_r^\delta(\lambda)+\frac{c_1}{(\delta^2\alpha)^d}\,\max_{\substack{u,v\ge\delta \\ u+v=1-\alpha}}\theta^\alpha_{ur}(\lambda)\,\theta^\alpha_{vr}(\lambda).
\end{equation}
\end{lemma}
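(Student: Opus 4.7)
The plan is to bound $\theta_r^\alpha$ by $\pi_r^\delta$ plus a sum over a cover of possible ``transition points'' in the annulus $\mathsf A:=\{y\in\bbR^d:2\delta r<\|y\|\le(1-2\delta)r\}$. On $(\pi_r^\delta)^c$, every Poisson ball $\mathsf B_R^z$ is either \emph{inner} (meaning $\mathsf B_R^z\subset\mathsf B_{(1-2\delta)r}$) or \emph{outer} (meaning $\mathsf B_R^z\cap\mathsf B_{2\delta r}=\emptyset$): a ball failing both would, by connectedness, simultaneously intersect $\mathsf B_{2\delta r}$ and $\partial\mathsf B_{(1-2\delta)r}$, triggering $\pi_r^\delta$.

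Suppose $\{\mathsf B_{\alpha r}\longleftrightarrow\partial\mathsf B_r\}\cap(\pi_r^\delta)^c$ holds and fix a chain $B_1,\ldots,B_k$ of overlapping $\eta$-balls realizing the connection. Since $\alpha\le\delta$, $\mathsf B_{\alpha r}\subset\mathsf B_{2\delta r}$, so $B_1$ intersects $\mathsf B_{2\delta r}$ and must be inner; while $B_k$ meets $\partial\mathsf B_r$ and cannot be inner, hence is outer. Letting $j^*$ be the largest index with $B_1,\ldots,B_{j^*}$ all inner, $B_{j^*+1}$ must be outer, and any $w\in B_{j^*}\cap B_{j^*+1}$ satisfies $2\delta r<\|w\|\le(1-2\delta)r$, i.e.~$w\in\mathsf A$.

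I then cover $\mathsf A$ by at most $N\le c(\alpha\delta)^{-d}$ balls $(\mathsf B_{\alpha\delta r}^{w_i})_{i=1}^N$ with $w_i\in\mathsf A$. On $\{w\in\mathsf B_{\alpha\delta r}^{w_i}\}$, the disjoint subsets $\{B_1,\ldots,B_{j^*}\}$ and $\{B_{j^*+1},\ldots,B_k\}$ of $\eta$ witness respectively the increasing events
\[
A_i:=\{\mathsf B_{\alpha r}\longleftrightarrow\mathsf B_{\alpha\delta r}^{w_i}\}\quad\text{and}\quad B_i:=\{\mathsf B_{\alpha\delta r}^{w_i}\longleftrightarrow\partial\mathsf B_r\},
\]
so the disjoint occurrence $A_i\circ B_i$ holds, and the van den Berg--Kesten inequality for Poisson point processes yields $\bbP_\lambda[A_i\circ B_i]\le\bbP_\lambda[A_i]\,\bbP_\lambda[B_i]$.

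Finally, set $u_i:=\|w_i\|/r-\alpha$ and $v_i:=1-\alpha-u_i$; combined with $\alpha\le\delta$, the constraint $w_i\in\mathsf A$ gives $u_i,v_i\ge\delta$ and $u_i+v_i=1-\alpha$. A realization of $A_i$ forces a path from $\mathsf B_{\alpha\delta r}^{w_i}$ to reach distance $u_ir$ from $w_i$, so by translation invariance and the inclusion $\mathsf B_{\alpha\delta r}\subset\mathsf B_{\alpha u_ir}$ (from $\delta\le u_i$), one gets $\bbP_\lambda[A_i]\le\theta_{u_ir}^\alpha(\lambda)$; analogously $\bbP_\lambda[B_i]\le\theta_{v_ir}^\alpha(\lambda)$. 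Summing over $i$ and absorbing $(\alpha\delta)^{-d}\le(\alpha\delta^2)^{-d}$ into the constant yields the claim. The delicate point is the disjoint-occurrence factorization: the Poisson BK inequality is a clean tool, but if one prefers to avoid citing it, the same factorization can be obtained intrinsically by decomposing $\eta$ as an independent superposition over the disjoint parameter regions $\{(z,R):\mathsf B_R^z\subset\mathsf B_{(1-2\delta)r}\}$ and its complement, and sequentially exploring the inner cluster of $\mathsf B_{\alpha r}$ before revealing the remaining balls, which requires some care to preserve independence across the transition at $w$.
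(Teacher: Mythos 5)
Your argument is correct, but it takes a genuinely different route from the paper. You locate the inner/outer transition point $w$ in the annulus $\mathsf A$, cover $\mathsf A$ by $O((\alpha\delta)^{-d})$ balls of radius $\alpha\delta r$, and factorize through the disjoint-occurrence (van den Berg--Kesten) inequality for Poisson point processes: the two sub-chains are disjoint witnesses of the increasing events $A_i$ and $B_i$ (to make them literally disjoint you should take a chain of minimal length, so no ball repeats, and the application of the Poisson BK inequality also deserves the routine truncation/approximation remark since the relevant parameter region is unbounded). Your bookkeeping $u_i,v_i\ge\delta$, $u_i+v_i=1-\alpha$ and the bounds $\bbP_\lambda[A_i]\le\theta^\alpha_{u_ir}(\lambda)$, $\bbP_\lambda[B_i]\le\theta^\alpha_{v_ir}(\lambda)$ are all fine, and you even get the slightly better entropic factor $(\alpha\delta)^{-d}$. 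The paper instead avoids any BK-type input: it discretizes the radial direction into scales $r_0<\dots<r_\ell$ with mesh $\tfrac{\alpha\delta}{2}r$, defines the inner event $\calB_k$ using only balls entirely contained in $\mathsf B_{r_{k+1}}$, so that $\calB_k$ and $\calC_k=\{\mathsf B_{r_{k+1}}\longleftrightarrow\partial\mathsf B_r\}$ depend on disjoint parameter regions of the Poisson process and are independent outright; sphere coverings and $\ell\le 2/(\alpha\delta)$ then yield the $(\delta^2\alpha)^{-d}$ factor. So your proof is shorter and marginally sharper in the constant, at the price of importing an external tool the paper deliberately does without, while the paper stays inside the elementary toolbox (independence over disjoint parameter sets plus union bounds) it uses throughout. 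One caution about your sketched BK-free alternative: revealing only the chain up to $j^*$ is not enough, since the balls $B_m$ with $m>j^*+1$ may again be inner and may even belong to the inner-ball cluster of $\mathsf B_{\alpha r}$, so independence is not preserved ``across the transition at $w$'' as stated; the fix is to explore the \emph{entire} cluster ${\bf C}$ of $\mathsf B_{\alpha r}$ in the inner process, observe that the first chain-ball leaving ${\bf C}$ is necessarily outer and that the remainder of the chain uses no ball of ${\bf C}$, whence the conditional probability of reaching $\partial\mathsf B_r$ off ${\bf C}$ is at most $\bbP_\lambda[B_i]$ -- this is exactly the role played, deterministically, by the paper's restriction to $\eta^{\mathsf B_{r_{k+1}}}$.
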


Note that the smaller the $\alpha$ and $\delta$, the larger the entropic factor $c_1/(\delta^2\alpha)^d$. 
We will see that the choices of $\alpha$ and $\delta$ are important in the applications. Except for the exponential bound on $\theta_r(\lambda)$ in the proof of Theorem~\ref{thm:2}, we will always pick $\delta=\alpha$.
%
\begin{proof}
Fix $0<\alpha\le \delta\le 1/4$ and $\lambda,r>0$. Set 
$$E:=\{2\delta r\}\cup ([2\delta r,(1-2\delta)r]\cap \tfrac{\alpha\delta}{2} r\bbZ)\cup\{(1-2\delta)r\}$$
and index the elements of $E$ in the increasing order by $r_0< r_1<\dots< r_\ell$. 
 Recall the notation $\eta^Z$ for the set of $(z,R)\in \eta$ with $\mathsf B_R^z\subset Z$, and introduce the three events 
\begin{align*}\calA&:=\{\exists (z,R)\in\eta:\mathsf B_R^z\cap\mathsf B_{2\delta r}\ne\emptyset\text{ and }\mathsf B_R^z\cap\partial\mathsf B_{(1-2\delta) r}\ne\emptyset\},\\
\calB_k&:=\{\mathsf B_{\alpha r}\longleftrightarrow\partial\mathsf B_{r_k}\text{ in }\calO(\eta^{\mathsf B_{r_{k+1}}})\},\\
\calC_k&:=\{\mathsf B_{r_{k+1}}\longleftrightarrow\partial\mathsf B_{r}\}.
\end{align*}
If $\mathsf B_{\alpha r}$ is connected to $\partial\mathsf B_r$ but $\calA$ does not occur, then $\calB_k\cap\calC_k$ must occur for some $0\le k<\ell$ (we use that $\delta\ge\alpha$). Furthermore, by construction, $\calB_k$ and $\calC_k$ are independent since $\calC_k$ is measurable with respect to  $\calO(\eta\setminus\eta^{\mathsf B_{r_{k}}})$. These two observations together imply that 
\begin{align}\label{eq:j1}
\theta^\alpha_r (\lambda)&\le \bbP_\lambda[\calA]+\sum_{k=0}^{\ell-1}\bbP_\lambda[\calB_k\cap\calC_k]=\pi^\delta_r(\lambda)+\sum_{k=0}^{\ell-1}\bbP_\lambda[\calB_k]\bbP_\lambda[\calC_k].
\end{align}
Fix $0\le k < \ell$. Let $X$ denote a set of points $x\in\partial\mathsf B_{\alpha r}$ such that the union of the balls $\mathsf B_{\alpha \delta r}^x$ for $x\in X$ covers $\partial\mathsf B_{\alpha r}$. Choose $X$ such that $|X|\le c_2 \delta^{-(d-1)}$. Applying the union bound, we find
\begin{align} \bbP_\lambda[\calB_k]&\le \sum_{x\in X}\bbP_\lambda[\mathsf B_{\alpha \delta r}^x\longleftrightarrow \partial\mathsf B_{r_k-\alpha r}^x]\le|X|\,\theta_{r_k-\alpha r}^\alpha(\lambda)\le \frac {c_2}{\delta^{d-1}}\,\theta_{r_k-\alpha r}^\alpha(\lambda).\label{eq:12}
\end{align}
Similarly, using a covering of   $\partial\mathsf B_{r_{k+1}}$ with at most $c_2'(\alpha\delta)^{1-d}$ balls of radius $\alpha\delta r$ centered on $\partial \mathsf B_{r_k}$, we  obtain
\begin{equation}
  \label{eq:11}
  \bbP_\lambda[\calC_k]\le \frac{c_2'}{(\delta\alpha)^{d-1}} \,\theta_{r-r_k}^\alpha(\lambda).
\end{equation}
Plugging \eqref{eq:12} and \eqref{eq:11} in \eqref{eq:j1} and using the fact that $\ell\le 2/(\alpha\delta)$ concludes the proof.
\end{proof}
The previous lemma leads to the following useful corollary.
\begin{corollary}\label{cor:a}
For $\lambda<\widetilde\lambda_c$,
$\displaystyle\lim_{r\rightarrow\infty}\bbP_\lambda[\mathsf B_r\longleftrightarrow\partial\mathsf B_{2r}]=0.$
\end{corollary}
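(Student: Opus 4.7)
The plan is to use Lemma~\ref{lem:17} to propagate smallness of $f(r) := \bbP_\lambda[\mathsf B_r \longleftrightarrow \partial\mathsf B_{2r}]$, available at some scale by the hypothesis $\lambda < \widetilde\lambda_c$, to all scales as $r \to \infty$. Throughout I fix $\alpha, \delta$ with $0 < 2\alpha < \delta < 1/4$ (say $\alpha = 1/20$, $\delta = 1/8$) and write $C := c_1/(\delta^2 \alpha)^d$ for the constant in the lemma. Three preliminary facts will be used. (i) $\pi_r^\delta(\lambda) \to 0$ as $r \to \infty$: by the explicit formula for $\pi_r^\delta$ and the $d$-th moment on $\mu$, a ball contributing to the event defining $\pi_r^\delta$ must have radius at least $(1-4\delta)r/2$, and $r^d \mu[cr,\infty] \to 0$ for every $c > 0$ under the $d$-moment assumption. (ii) Covering $\mathsf B_r$ by $N_d = O_d(1)$ balls of radius $\alpha r$ centered inside $\mathsf B_r$, and using that $\partial \mathsf B_{2r}$ is at distance $\ge r$ from each such center, yields $f(r) \le N_d\, \theta_r^\alpha(\lambda)$. (iii) For $s \in [2\rho, \rho/\alpha]$, the inclusions $\mathsf B_{\alpha s} \subset \mathsf B_\rho \subset \mathsf B_{s/2}$ force any connection witnessing $\theta_s^\alpha$ to cross $\partial\mathsf B_{2\rho}$, so $\theta_s^\alpha \le f(\rho)$ on this range; specialising to $\rho = \alpha s$ and combining with (ii) yields the sub-multiplicativity $f(r) \le N_d\, f(\alpha r)$.

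Next I would show that $\inf_{r > 0} f = 0$ actually forces $\liminf_{r \to \infty} f(r) = 0$. If on the contrary $f(r) \ge \varepsilon$ for all $r \ge R$, iterating the sub-multiplicativity yields $f(s) \ge \varepsilon/N_d^k$ for every $s \ge \alpha^k R$, which bounds $f$ away from $0$ on every compact subset of $(0, \infty)$. Together with the trivial single-ball lower bound $\liminf_{r \to 0} f(r) \ge 1 - \exp(-\lambda c_d \int_0^\infty R^d d\mu(R)) > 0$ (coming from a ball in $\eta$ that both contains $0$ and reaches distance $2r$, which has positive probability uniformly as $r \to 0$), this gives $\inf_{r > 0} f > 0$, a contradiction.

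Now, given $\varepsilon > 0$ with $C\varepsilon \le 1/4$, I pick $r_0$ large enough that $f(r_0) \le \varepsilon/4$ (possible by the preceding paragraph) and $\pi_r^\delta(\lambda) \le \varepsilon/4$ for $r \ge 2 r_0$ (possible by (i)). By (iii), $\theta_s^\alpha \le \varepsilon/4$ on $[2 r_0, r_0/\alpha]$. I then iterate Lemma~\ref{lem:17}: if $\theta_s^\alpha \le \varepsilon/2$ holds on $[2 r_0, B]$ with $B \ge 2 r_0/\delta$, applying the lemma at scales $r \in [2r_0/\delta,\, B/(1-\alpha-\delta)]$ forces $ur, vr \in [2 r_0, B]$, so the inner maximum is at most $(\varepsilon/2)^2$ and the lemma gives $\theta_r^\alpha \le \pi_r^\delta + C(\varepsilon/2)^2 \le \varepsilon/4 + \varepsilon/16 \le \varepsilon/2$, extending the controlled interval to $[2r_0, B/(1-\alpha-\delta)]$. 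The condition $2\alpha < \delta$ ensures the starting interval's ratio $1/(2\alpha)$ exceeds $1/\delta$ so the iteration begins without leaving a gap, and each step multiplies the upper end by $1/(1-\alpha-\delta) > 1$; after finitely many steps the controlled interval covers $[2 r_0, \infty)$. Hence $\limsup_{r \to \infty} \theta_r^\alpha \le \varepsilon/2$, and since $\varepsilon$ was arbitrary, $\theta_r^\alpha \to 0$; by (ii), $f(r) \to 0$.

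The main obstacle is the bookkeeping in the iteration, in particular the coordination of the parameters $\alpha, \delta, \varepsilon$ with the scale $r_0$ so that simultaneously (a) the starting interval has the ratio required by the lemma (enforced by $2\alpha < \delta$, without which the first application of the lemma would leave a gap), (b) $\pi_r^\delta$ is uniformly small on the whole iteration range (enforced by taking $r_0$ large, which is possible only because of the preliminary reduction $\liminf_{r \to \infty} f = 0$ in the second paragraph), and (c) the $C\varepsilon^2$ errors do not accumulate across iterations (enforced by taking $\varepsilon$ small).
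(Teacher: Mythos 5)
Your proof is correct and takes essentially the same route as the paper: seed smallness of the crossing probability at one large scale using $\lambda<\widetilde\lambda_c$, note that $\pi_r^\delta(\lambda)\to0$ under the moment assumption, and bootstrap via Lemma~\ref{lem:17} to all large scales, finishing with a covering argument. The differences are only bookkeeping: the paper takes $\delta=\alpha=1/6$ and interpolates between geometric scales with a quasi-monotonicity inequality, whereas you track whole intervals of scales with $2\alpha<\delta$ and, usefully, spell out (via sub-multiplicativity and the small-$r$ single-ball bound) why a small crossing probability can be found at arbitrarily \emph{large} scales, a point the paper leaves implicit.
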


\begin{proof}
Choose $\alpha=1/6$. In this case 
$$\theta_{r}^\alpha(\lambda)=\bbP_\lambda[\mathsf B_{r/6}\longleftrightarrow\partial\mathsf B_{r}].$$
Using a covering of $\partial\mathsf B_r$ with balls of radius $B_{r/6}$, it suffices to prove that $\theta_{r}^\alpha(\lambda)$ converges to $0$.  
 A reasoning similar to the bound on $\bbP_\lambda[\calB_k]$ in the last proof implies that for every $s\in[\alpha r, r]$, 
\begin{equation}\theta_r^\alpha(\lambda)\le c_3\,\theta_s^\alpha(\lambda),\label{eq:l1}\end{equation}
which enables us to rewrite \eqref{eq:14} (applied with $\delta=\alpha=1/6$) as
$$\theta_r^\alpha(\lambda)\le \pi^{\alpha}_r(\lambda)+c_4 \,\theta^\alpha_{\alpha r}(\lambda)^2.$$
Now, fix $\ep>0$ such that $2c_4\ep<1$ and work with $r_0$ large enough that $\pi^{\alpha}_r(\lambda)\le \ep/2$ for every $r\ge r_0$. Since $\lambda<\widetilde \lambda_c$, we can pick $r\ge r_0$ such that $\theta_r^\alpha(\lambda)<\ep$. We deduce inductively that $$\theta_{r/\alpha^k}^\alpha(\lambda)\le \ep$$ for every $k\ge1$.
Using \eqref{eq:l1} one last time gives that $\theta_s^\alpha(\lambda)\le c_3\ep$ for every $s\ge r$.
\end{proof}
\subsection{Proof of Theorem~\ref{thm:2}}\label{sec:4.2}

Consider $\lambda<\widetilde \lambda_c$. Fix $\delta\in(0,\tfrac12)$ and observe that since $\mu[r,\infty]\le \exp(-cr)$, \eqref{eq:p1} implies that there exists $c'>0$ such that $\pi_r^\delta(\lambda)\le \exp(-c' r)$ for every $r\ge1$.
We now proceed in two steps: we first show that $\theta_r(\lambda)\le r^{-d-1}$ for $r$ large enough, and then improve this estimate to an exponential decay.  

\paragraph{Polynomial bound on $\theta_r(\lambda)$}
Lemma~\ref{lem:17} applied to $\alpha=\delta=1/6$ implies that for every $r\ge 1$
\begin{equation}
  \label{eq:6}
\theta_r^\alpha(\lambda) \le  e^{-c' r}+c_5 \max_{\substack{u,v\ge\alpha \\ u+v=1-\alpha}}\theta_{ur}^\alpha(\lambda)\theta_{vr}^\alpha(\lambda).
\end{equation}
Since $\lambda<\widetilde \lambda_c$, Corollary~\ref{cor:a} implies that $\theta_r^\alpha(\lambda)$ converges to $0$ as $r$ tends to infinity. In particular, we deduce that for every $\ep>0$,  
$$\theta_r^\alpha(\lambda)\le e^{-c' r}+\ep\,\theta_{\alpha r}^\alpha(\lambda)$$
for $r$ large enough. A simple induction implies that $\theta_r^\alpha(\lambda)\le r^{-d-1}$ for $r$ large enough. 
\paragraph{Exponential bound on $\theta_r(\lambda)$}
Choose $\delta=1/6$. For each $r\ge6$, apply Lemma~\ref{lem:17} with $\alpha=\alpha(r):=1/r$ to get that 
$$\theta_r(\lambda)\le e^{-c'r}+c_6r^d \max_{\substack{u,v\ge\tfrac16 \\ u+v=1-\alpha(r)}}\theta_{ur}(\lambda)\theta_{vr}(\lambda),
$$
where we use that  
$$\theta_r(\lambda)\le \theta_r^{\alpha(r)}(\lambda)=\bbP_\lambda[\mathsf B_1\longleftrightarrow\partial\mathsf B_r]\le \tfrac1{c_{\rm IT}}\theta_r(\lambda).$$ 
Set  $c_7:=2 \, c_6 \, e $ and consider $r_0$ large enough so that for every $r\ge \delta r_0$,
\begin{align*}
&c_7r^d\,e^{-c'r}\le \tfrac12e^{-r/r_0},\text{ and }\\
&c_7r^d\,\theta_r(\lambda)\le e^{-1}.\end{align*}
(The second constraint is satisfied thanks to the  polynomial bound derived in the first part of the proof.)
By induction on $k$, one can show that for every $k\ge 0$,    
$$\forall r\in[\delta r_0, r_0/(1-\delta)^k]\quad c_7r^d\,\theta_r(\lambda)\le \exp(-r/r_0).$$

\subsection{Proof of Theorem~\ref{thm:3}}\label{sec:4.3}

In this section, we prove Theorem~\ref{thm:3} by proving the following quantitative lemma. We believe the lemma to be of value for distributions $\mu$ other than the one considered in Theorem~\ref{thm:3}. Before that, note that the lower bound
\begin{equation}\theta_r(\lambda)\ge \phi_r(\lambda)\label{eq:m1}\end{equation}
for every $r>0$ is trivial since 0 is connected to $\partial\mathsf B_r$ in the case where there exists $(z,R)\in \eta$ such that $\mathsf B_R^z$ contains 0 and intersects $\partial\mathsf B_r$. Therefore, the main concern of this section will be an upper bound on $\theta_r^\alpha(\lambda)$ (which is larger than $\theta_r(\lambda)$).
 \begin{lemma}\label{thm:16}
Consider $\lambda>0$, $\eta<1$ and $\alpha\in(0,\tfrac14)$ small enough  such that $\alpha^{\eta}+(1-\alpha)^{\eta}\ge 1$. For every $\ep>0$ sufficiently small and $r\ge1$, if there exists $r_0\le \alpha r$ such that
\begin{align}
\theta_s^\alpha(\lambda)&\le \ep\, \pi_{r}^\alpha(\lambda)^{(s/r)^\eta} &\forall s\in[r_0,r_0/\alpha]\label{eq:f1}\\
\pi_s^\alpha(\lambda)&\le \ep\,\pi_r^\alpha(\lambda)^{(s/r)^\eta}&\forall s\in[r_0,(1-\alpha)r],\label{eq:f2}
\end{align}
then
\begin{equation}
\theta_{r}^\alpha(\lambda)\le (1+\ep)\,\pi_{r}^\alpha(\lambda).
\end{equation}

\end{lemma}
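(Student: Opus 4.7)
The plan is to apply Lemma~\ref{lem:17} with $\delta=\alpha$ and iterate it across scales, tracking the renormalized ratio
$$R(s):=\frac{\theta_s^\alpha(\lambda)}{\pi_r^\alpha(\lambda)^{(s/r)^\eta}}.$$
Hypothesis~\eqref{eq:f1} directly bounds $R$ on the initial range $[r_0,r_0/\alpha]$, hypothesis~\eqref{eq:f2} will handle the $\pi_s^\alpha$ contribution of the renormalization inequality for $s\in[r_0,(1-\alpha)r]$, and the target $\theta_r^\alpha(\lambda)\le(1+\ep)\pi_r^\alpha(\lambda)$ reduces exactly to $R(r)\le 1+\ep$.

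Plugging Lemma~\ref{lem:17} into the definition of $R$ and dividing by $\pi_r^\alpha(\lambda)^{(s/r)^\eta}$ gives, for $s\in[r_0,(1-\alpha)r]$, the recursion
$$R(s)\le \ep+\frac{c_1}{\alpha^{3d}}\max_{\substack{u+v=1-\alpha\\ u,v\ge\alpha}}R(us)R(vs)\,\pi_r^\alpha(\lambda)^{(s/r)^\eta(u^\eta+v^\eta-1)}.$$
For $\eta<1$ and $\alpha$ small, concavity of $x\mapsto x^\eta$ together with the hypothesis $\alpha^\eta+(1-\alpha)^\eta\ge 1$ (supplemented if needed by taking $\alpha$ slightly smaller, so that the endpoint value $\alpha^\eta+(1-2\alpha)^\eta$ is also $\ge 1$) forces $u^\eta+v^\eta\ge 1$ on the whole admissible range. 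Since $\pi_r^\alpha(\lambda)\le 1$, the power of $\pi_r^\alpha(\lambda)$ in the recursion is then bounded by $1$, and we are left with $R(s)\le \ep+C_\alpha\max R(us)R(vs)$, where $C_\alpha:=c_1/\alpha^{3d}$.

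Next I would induct on the scales $L_k$ defined by $L_0:=r_0/\alpha$ and $L_{k+1}:=L_k/(1-2\alpha)$ to show $R(s)\le 2\ep$ on $[r_0,L_k]$. The base case $k=0$ is exactly \eqref{eq:f1}. In the induction step, for $s\in[L_k,L_{k+1}]$ and $u,v\ge\alpha$ with $u+v=1-\alpha$, one has $us,vs\in[\alpha L_k,(1-2\alpha)L_{k+1}]=[r_0,L_k]$, so the induction hypothesis gives $R(us)R(vs)\le 4\ep^2$ and the recursion closes as $R(s)\le\ep+4C_\alpha\ep^2\le 2\ep$ provided $\ep\le 1/(4C_\alpha)$. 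Iterating until $L_K\ge(1-\alpha)r$ (a finite, $r$-dependent number of steps) establishes $R(s)\le 2\ep$ on all of $[r_0,(1-\alpha)r]$.

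To conclude, I apply Lemma~\ref{lem:17} one final time at $s=r$: since $r_0\le\alpha r$, the arguments $ur,vr\in[\alpha r,(1-2\alpha)r]$ lie in $[r_0,(1-\alpha)r]$, so the bound just proved together with $u^\eta+v^\eta\ge 1$ yield
$$\theta_r^\alpha(\lambda)\le\pi_r^\alpha(\lambda)+4C_\alpha\ep^2\,\pi_r^\alpha(\lambda)\le(1+\ep)\pi_r^\alpha(\lambda)$$
for $\ep$ sufficiently small (the same constraint $\ep\le 1/(4C_\alpha)$ suffices). The main obstacle is the self-referential nature of the recursion---$R(s)$ is controlled only in terms of $R(us),R(vs)$ with $us,vs$ barely smaller than $s$---and the scale ratio $L_{k+1}/L_k=1/(1-2\alpha)$ is chosen precisely so that each induction step reduces to scales already controlled at the previous step, preventing the uncontrolled blow-up that would plague a naive single-scale argument.
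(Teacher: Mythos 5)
Your proposal is correct and follows essentially the same argument as the paper: apply Lemma~\ref{lem:17} with $\delta=\alpha$, prove by induction over scales that $\theta_s^\alpha(\lambda)\le 2\ep\,\pi_r^\alpha(\lambda)^{(s/r)^\eta}$ for all $s\in[r_0,(1-\alpha)r]$ using \eqref{eq:f1} as the base case, \eqref{eq:f2} for the $\pi_s^\alpha$ term, and $u^\eta+v^\eta\ge 1$ together with $\pi_r^\alpha(\lambda)\le 1$, then apply the renormalization inequality once more at $s=r$ under the constraint $\ep\lesssim \alpha^{3d}/c_1$. Your ratio $R(s)$ and explicit scale discretization $L_k$ are just a (slightly more careful) repackaging of the paper's induction, so there is nothing more to add.
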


\begin{remark}\label{rem:22} 1) Note that distributions $\mu$ with exponential decay do not satisfy the assumptions of the lemma for any $r$, provided that $\ep$ is small enough. \medbreak
\noindent
2) For distributions $\mu$ satisfying $\lim_{r\rightarrow\infty}\pi_r^\alpha(\lambda)^{1/r^\eta}= \kappa$, for some constant $\kappa$, the existence of $r_0$ satisfying \eqref{eq:f1} when $\lambda<\tilde\lambda_c$ follows directly from
\begin{itemize}[noitemsep,nolistsep]
\item picking $r_0$ such that $\theta_r^\alpha(\lambda)<\tfrac{\ep}{2\kappa}$ for every $r\ge r_0$ (by Corollary~\ref{cor:a}),
\item picking $r$ large enough that $\pi_r^\alpha(\lambda)^{1/r^\eta}\ge \kappa 2^{-(\alpha/r_0)^\eta}$. 
\end{itemize}
\medbreak\noindent
3) The second assumption \eqref{eq:f2} is a regularity statement on $\pi_r^\alpha(\lambda)$ that can be obtained from the regularity of $\mu[r,\infty]$ using \eqref{eq:p1}.\end{remark}

The proof of the lemma will be based on the recursive relation given by Lemma~\ref{lem:17}. 
We use a strategy which is inspired by the study of differential equations. We control the function $f(s):=\theta_s^\alpha(\lambda)$  in terms of $g(s):=\pi_r^\alpha(\lambda)^{(s/r)^\eta}$  uniformly for $s\in [ r_0, (1-\alpha) r]$ to finally deduce a bound for $f(r)$ using again Lemma~\ref{lem:17}.

\begin{proof}
Set $C=c_1\alpha^{-2d}$, where $c_1$ is given by Lemma~\ref{lem:17}, and assume that $\ep<\tfrac1{4C}$. The key ingredient will be the following inequality: for every $s\in[r_0,(1-\alpha)r]$, 
\begin{equation}\label{eq:h1}
\theta_s^\alpha(\lambda)\le 2\ep\, \pi_r^\alpha(\lambda)^{(s/r)^\eta}.
\end{equation}
For $s\in[r_0,r_0/\alpha]$, \eqref{eq:h1} follows from \eqref{eq:f1}. To obtain the claim for $s> r_0/\alpha$, use the induction hypothesis in the second line below to get
\begin{align*}\theta^\alpha_s(\lambda)&\stackrel{{\eqref{eq:14}}}\le \pi_s^\alpha(\lambda)~+~C \max_{\substack{u,v\ge \alpha \\ u+v=1-\alpha}}\theta^\alpha_{us}(\lambda)\,\theta^\alpha_{vs}(\lambda)\\
&\stackrel{\eqref{eq:h1}}\le \pi_s^\alpha(\lambda)~+~4C\ep^2 \max_{\substack{u,v\ge\alpha \\ u+v=1-\alpha}}\pi^\alpha_r(\lambda)^{(u^\eta+v^\eta)(s/r)^\eta}\\
&\stackrel{\eqref{eq:f2}}\le (\ep+4C\ep^2)\, \pi_r^\alpha(\lambda)^{(s/r)^\eta}\\
 &\stackrel{\phantom{\eqref{eq:f2}}}\le 2\ep\,\pi_r^\alpha(\lambda)^{(s/r)^\eta}.\end{align*}
In the first line, we applied \eqref{eq:14} with $\delta=\alpha$. In the second line, we used \eqref{eq:h1} for $us,vs\in[r_0,s-r_0]$ and in the third line we used $u^\eta+v^\eta\ge 1$  for every  $u,v\ge\alpha$ satisfying $u+v=1-\alpha$. 
One may apply the first two lines in the previous sequence of inequalities for $s=r$ (since in such case $ur,vr\in[r_0,(1-\alpha)r]$) so that
\begin{align*}\theta^\alpha_r(\lambda)&
\le \pi_r^\alpha(\lambda)+4C\ep^2 \max_{\substack{u,v\ge\alpha \\ u+v=1-\alpha}}\pi^\alpha_r(\lambda)^{u^\eta+v^\eta}\le (1+\ep)\, \pi_r^\alpha(\lambda).\end{align*}
This concludes the proof.
\end{proof}


\begin{proof}[Proof of Theorem~\ref{thm:3}] Fix $\lambda<\widetilde\lambda_c$.
Since \eqref{eq:m1} gives the lower bound, we focus on the upper bound. Under the condition C1, define $f(\alpha,r)$ by   
\begin{equation}
\pi_r^\alpha(\lambda)=f(\alpha,r) r^{-c},\label{eq:n1}
\end{equation}
 and, under the condition C2, define $g(\alpha,r)$ by 
\begin{equation}
\pi_r^\alpha(\lambda)=g(\alpha,r) r^{d-\gamma} \exp[-c (\frac r 2-{2\alpha} r)^\gamma]. \label{eq:n2}
\end{equation}
Using \eqref{eq:p1}, one can check that $f(\alpha,r)$ and $g(\alpha,r)$ converge (as $r$ tends to infinity) uniformly in $0\le \alpha<\tfrac14$ to two continuous positive functions $f(\alpha)$ and $g(\alpha)$.

Consider $\ep$ very small and use Corollary~\ref{cor:a} to guarantee that $\theta_{s}^\alpha(\lambda)<\ep$ for every $s$ large enough. Then, for every distribution $\mu$ satisfying C1 or C2, one can find $0<\eta<1$, $r_0$ large enough such that \eqref{eq:f1} and \eqref{eq:f2} are satisfied for every  $r\ge r_0$ large enough, and $\alpha$ small enough  (use \eqref{eq:n1} and \eqref{eq:n2}  and Remark~\ref{rem:22} to check the two conditions), so that Lemma~\ref{thm:16} implies that for $r$ large enough,
\begin{equation}\theta_r^\alpha(\lambda)\le (1+\ep)\pi_r^\alpha(\lambda)\label{eq:bn}.\end{equation}

Thanks to \eqref{eq:n1}, we obtain the claim in case C1 by letting $\ep$ and then $\alpha$ tend to 0. 
For case C2, we need to do more, since letting $\alpha$ tend to 0 slowly gives only that $\theta_r(\lambda)\le \phi_r(\lambda)^{1+o(1)}$. 
More precisely, we prove the following claim. 

\paragraph{Claim}{\em Assume that $\mu$ satisfies $\mathrm{C2}$. Fix $\overline \gamma\in(\gamma,1)$. There exist $\alpha>0$ and $R<\infty$ such that for every $k\ge0$ and every $r\ge 3^kR$, we have that 
\begin{equation}\label{eq:b2}\theta_{r}^{\alpha_k}(\lambda)\le (1+2^{-k})\pi_r^{\alpha_k}(\lambda)\end{equation}
where
$\alpha_k:= \alpha 3^{-k \overline\gamma}$.}
\medbreak
 Before proving the claim, note that it implies, together with \eqref{eq:n2}, that for every $k\ge0$ and $r\in [3^k R,3^{k+1}R)$,
 $$\theta_r(\lambda)\le\theta_{r}^{\alpha_k}(\lambda)\le  (1+2^{-k})\pi_r^{\alpha_k}(\lambda)\le (1+2^{-k}) \frac{g(\alpha_k,r)}{g(0,r)}e^{Cr^{\gamma-\overline\gamma}} \phi_r(\lambda)$$
for some constant $C=C(\alpha,R)>0$. This proves the theorem in case C2 thanks to the choice of $\overline\gamma>\gamma$  and the uniform convergence of $g(.\,,r)$ towards a continuous function.

 \medbreak\noindent
{\em Proof of the Claim.} We prove \eqref{eq:b2} recursively. Fix $0< \alpha\le 1/8$ such that 
\begin{align}
  &2 (1-12\alpha)^\gamma \left(\tfrac{1-\alpha}2\right)^\gamma\ge 1+\tfrac14 \alpha^\gamma,\text{ and} \label{eq:9}\\
  \label{eq:10}
  &\forall x\in[0,\alpha] \quad  (1 - 12x)^{2\gamma}+(x/2)^\gamma
  \ge 1+\tfrac14  x^\gamma.
\end{align}
 Fix also $R\ge1$. There will be several conditions on $R$ that will be added during the proof. The important feature is that every time a new condition is added, one only needs to take $R$ possibly larger than before.  

Define $M$ such that for every $x\le \tfrac14$, $\tfrac2M\le g(x)\le\tfrac M2$. By uniform convergence, $R$ can be chosen large enough that for every $x\le \tfrac14$ and $r\ge R$,
\begin{equation}\tfrac1M\le g(x,r)\le M.\label{eq:b1}\end{equation}

Use \eqref{eq:bn} to choose $R$ in such a way that $\theta_r^{\alpha}(\lambda)\le 2\pi_r^\alpha(\lambda)$ for every $r\ge \alpha R$. Then, \eqref{eq:b2} follows for $k=0$ by definition. We now assume that \eqref{eq:b2} is valid for $k-1$ and prove it for $k$.
Lemma~\ref{lem:17} applied to $r\ge R2^k$ and $\alpha_k$  gives that
\begin{align}\label{eq:c1}\theta_{r}^{\alpha_k}(\lambda)&\le \pi_{r}^{\alpha_k}(\lambda)+\frac{c_1}{\alpha_k^{3d}} \,\theta_{ur}^{\alpha_k}(\lambda)\,\theta_{vr}^{\alpha_k}(\lambda),\end{align}
for some $u,v \ge\alpha_k$ such that $u+v=1-\alpha_k$. Without loss of generality, we can assume that $u \geq \frac 1 3$ and $v\le \frac{1-\alpha_k}2$. In particular, we have $ur\ge R3^{k-1}$. Also, 
$vr\ge \alpha R3^{k (1-\overline\gamma)}\ge \alpha R$. The induction hypothesis (in fact we simply bound $2^{-{(k-1)}}$ by $1$) implies that 
$$\theta_{ur}^{\alpha_k}(\lambda)\le \theta_{ur}^{\alpha_{k-1}}(\lambda)\le 2\pi_{ur}^{\alpha_{k-1}}(\lambda)\qquad \text{and}\qquad\theta_{vr}^{\alpha_k}(\lambda)\le \theta_{vr}^{\alpha}(\lambda)\le 2\pi_{vr}^\alpha(\lambda).$$
We may use the induction hypothesis to get that
\begin{align}
\theta_{ur}^{\alpha_k}(\lambda)\theta_{vr}^{\alpha_k}(\lambda)&\stackrel{\eqref{eq:b2}}\le 4 \pi_{ur}^{\alpha_{k-1}}(\lambda)\pi_{vr}^\alpha(\lambda)\nonumber\\
& \stackrel{\eqref{eq:b1}}\leq 4 M^2 (uvr^2)^{d-\gamma} \exp \big[-c(r/2)^\gamma \big((u - 4\alpha_{k-1}u)^\gamma+(v  -4 \alpha v)^\gamma\big)\big] \label{eq:8}.
\end{align}
We now bound the term $$h(v):=(u - 4\alpha_{k-1}u)^\gamma+(v  -4 \alpha v)^\gamma{=} (1 - 4\cdot 3^{\overline\gamma}\alpha_{k})^\gamma(1-\alpha_k-v)^\gamma+(1-4 \alpha)^\gamma v^\gamma$$ appearing in the exponential in \eqref{eq:8}. Since $\alpha_k\le v\le \frac{1-\alpha_k}2$, an elementary analysis of the function $h$ shows that $h(v) \ge \min (h(\alpha_k),h(\tfrac{1-\alpha_k}2))$. Using 
\eqref{eq:9} and \eqref{eq:10} to bound $h(\frac{1-\alpha_k}2)$ and $h(\alpha_k)$ respectively, we obtain 
\begin{align*}
   h(v)\ge 1+ \tfrac14 \alpha_k^\gamma \ge(1 - 4\alpha_k)^\gamma +\tfrac14  \alpha_k^\gamma.
\end{align*}
Plugging this estimate in \eqref{eq:8} and using \eqref{eq:b1} one last time, we finally get 
\begin{align*}
  \theta_{ur}^{\alpha_k}(\lambda)\theta_{vr}^{\alpha_k}(\lambda)&  \stackrel{\phantom{\eqref{eq:b3}}}\leq 4M^3 r^{2d} \exp [-\tfrac c8 (\alpha_kr)^\gamma ] \, \pi_r^{\alpha_k} (\lambda)\\
&\stackrel{\eqref{eq:b3}}\le \frac{\alpha_k^{3d}}{c_12^k}\,\pi_r^{\alpha_k}(\lambda),
\end{align*}
 which concludes the proof using \eqref{eq:c1}.
In the second line, we used that $R$ is chosen so large that for every $k\ge0$ and $r\ge 3^k R$,
\begin{align}\label{eq:b3}
\frac {c_1 2^k}{\alpha_k^{3d}} 4 M^3 r^{2d} \exp [-\tfrac c8(\alpha_kr)^\gamma] &\leq  \frac {c_1}{\alpha^{3d}} 4 M^3 r^{6d} \exp [-\tfrac c8 (\alpha r^{1-\overline\gamma})^\gamma ] \leq 1.
\end{align}
\end{proof}
\begin{remark}
By adapting the reasoning above, one can prove similar statements for other distributions $\mu$ having sub-exponential tails, for instance $\mu[r,\infty]$ decaying like $\exp[-c(\log r)^\gamma]$ or $\exp[-cr/(\log r)^\gamma]$.
\end{remark}
\paragraph{Acknowledgments} 
This research was supported by an IDEX grant from Paris-Saclay, the ERC grand CriBLaM, and the NCCR SwissMAP.

\end{document}